\newcommand{\xmark}{\ding{55}}%
\DeclareMathOperator{\diag}{diag}
\DeclareMathOperator*{\mini}{min.}
\DeclareMathOperator*{\lexmin}{lexmin.}
\definecolor{light-gray}{gray}{0.95}
\definecolor{dark-gray}{gray}{0.5}
\definecolor{mygray}{gray}{0.75}
\newcommand{\BIN}{\begin{bmatrix}}
\newcommand{\BOUT}{\end{bmatrix}}
\newcommand{\mA}{{\mathcal{A}}}
\newcommand{\mI}{{\mathcal{I}}}
\newcommand{\bE}{{\mathbb{E}}}
\newcommand{\bI}{{\mathbb{I}}}
\newcommand{\bC}{{\mathbb{C}}}
\newcommand{\pluseq}{\mathrel{+}=}
\definecolor{orange}{rgb}{0.99,0.69,0.07}
\definecolor{lightgray}{gray}{0.85}
\definecolor{light-gray}{gray}{0.95}
\definecolor{dark-gray}{gray}{0.5}
\tikzset{cross/.style={cross out, draw=black, minimum size=2*(#1-\pgflinewidth), inner sep=0pt, outer sep=0pt},
cross/.default={1pt}}
 \newcommand\fs@spaceruled{\def\@fs@cfont{\bfseries}\let\@fs@capt\floatc@ruled
   \def\@fs@pre{\vspace{5pt}\hrule height.8pt depth0pt \kern2pt}%
   \def\@fs@post{\kern2pt\hrule\relax}%
   \def\@fs@mid{\kern2pt\hrule\kern2pt}%
   \let\@fs@iftopcapt\iftrue}
\newtheorem{theorem}{Theorem}
\newcommand{\thickhline}{%
	\noalign {\ifnum 0=`}\fi \hrule height 1pt
	\futurelet \reserved@a \@xhline
}
\newcolumntype{"}{@{\hskip\tabcolsep\vrule width 1pt\hskip\tabcolsep}}
\title{\LARGE \bf Dual Hierarchical Least-Squares Programming\\ with Equality Constraints}
\author{Kai Pfeiffer\thanks{Department of Mechatronics and Robotics, School of Advanced Technology, Xi'an Jiaotong Liverpool University, Suzhou, China; The author is supported by the RDF-24-02-060 XJTLU Research Development Fund.
	}%
}
\date{}
\begin{document}
	
	\maketitle
	\thispagestyle{empty}
	\pagestyle{empty}
	
	\begin{abstract}%
		Hierarchical least-squares programming (HLSP) is an important tool in optimization as it enables the stacking of any number of priority levels in order to reflect complex constraint relationships, for example in physical systems like robots. Existing solvers typically address the primal formulation of HLSP's, which is computationally efficient due to sequential treatment of the priority levels. This way, already identified active constraints can be eliminated after each priority level, leading to smaller problems as the solver progresses through the hierarchy. However, this sequential progression makes the solvers discontinuous and therefore not differentiable. This prevents the incorporation of HLSP's as neural network neurons, or solving HLSP's in a distributed fashion. In this work, an efficient solver based on the dual formulation of HLSP's with equality constraints (\ref{eq:d-hlsp}) is developed. \ref{eq:d-hlsp} is a convex and differentiable quadratically constrained least-squares program (QCLSP), which is solved by an Alternating Direction Method of Multipliers (ADMM). By introducing appropriate operator splitting, primal-dual variables, which link each priority level with all their respective higher priority levels, can be eliminated from the main computation step of computing a matrix factorization. The proposed solver D-HADM is about one magnitude faster than a comparable \ref{eq:d-hlsp} solver based on the interior-point method (D-HIPM), where  the  primal-dual variables enter the computational complexity in a cubic fashion.
	\end{abstract}
	
	\section{Introduction}
	Hierarchical least-squares programming (HLSP) has seen many applications, for example in compliant control of humanoid robots \cite{Herzog2016}, in control of manipulators with obstacle avoidance in space stations~\cite{Xing2025} or for robot assisted surgeries~\cite{Colan2024}, in robot design optimization~\cite{Ginnante2023}, or in control of robot swarms for object transportation \cite{Koung2021}. The ability to prioritize constraints by any number of priority levels accommodates simple formulation of complex constraint relationships, since constraints do not need to be weighted against each other, for example in complex scenarios with many constraints like in humanoid stair climbing  \cite{Vaillant2016}. Efficient solvers based on the primal formulation of HLSP's have been proposed, which solve a sequence of least-squares programs (LSP) from highest to lowest priority level~\cite{escande2014}. This is computationally advantageous since after convergence of each priority level, active constraints and occupied variables can be eliminated by nullspace projections and do not further need to be  considered on lower hierarchy levels. However, the overall  solution of the HLSP is not differentiable, as it is  the sum of independent contributions from the sequentially solved LSP's. The work in \cite{Sathya2021} presents a dual formulation of hierarchical linear programs (HLP), which results in a single linear program with linear constraints. However, an efficient solver has not been proposed. Critically, the dual HLP grows with each priority level in the number of primal variables, since the dual variables from previous levels need to be considered as primal ones on subsequent levels.
	
	Given these restrictions on HLSP, this work presents several advances for HLSP's with equality constraints (HLSP-E):
	\begin{itemize}
		\itemsep0pt
		\item The dual formulation of HLSP-E's as a quadratically constrained least-squares program (QCLSP, a sub-form of quadratically constrained quadratic programs / QCQP; see Sec. \ref{sec:d-hlsp}) is derived, which can be solved with methods from convex optimization. This would allow to solve HLSP-E's in a distributed fashion~\cite{Chen2021}, for example for distributed prioritized robot control.
		\item The gradient of the dual formulation of the HLSP-E is formulated using techniques from matrix differential calculus (see App. \ref{sec:diffdhlsp}). This lays the foundation for future applications like inclusion of HLSP-E's as neural network neurons for representing hierarchical relationships in the training data. 
		\item An efficient solver for the dual HLSP-E based on the Alternating Direction Method of Multipliers (ADMM) is developed (Sec. \ref{sec:hadmm}). Appropriate operator splitting ensures that primal-dual variables do not need to be considered in demanding matrix factorizations. It is demonstrated how such an efficient formulation cannot be immediately found for an interior-point method (IPM) based solver (see App. \ref{app:ipm}).
	\end{itemize}
	This article is structured as follows. First, Sec.~\ref{sec:relatedwork} gives an overview of existing literature on hierarchical programming.
	Section~\ref{sec:backgroundhlsp} deepens this insight by introducing the primal HLSP \eqref{eq:hlsp}. In contrast, Sec.~\ref{sec:d-hlsp} formulates the dual HLSP for equality constraints \ref{eq:d-hlsp}, which is a convex QCLSP. Such programs can be solved by methods from convex optimization, where this work introduces exemplary solvers both based on the ADMM (Sec. \ref{sec:hadmm}) and the IPM (App. \ref{app:ipm}). Section~\ref{sec:hpcompare} categorizes these solvers with respect to existing solvers in hierarchical programming with linear constraints.
	Evaluation on randomized and linear deficient test programs takes place in Sec.~\ref{sec:eval}. Section~\ref{sec:conclusion} concludes this article with a future outlook and reiterates on the advantages and disadvantages of dual HLSP.

	\section{Related work}
	\label{sec:relatedwork}
	HLSP can be broadly categorized as lexicographic multi-objective optimization (LMOO), which has its origin in \cite{Sherali1983}. This work establishes appropriate weights which transform a hierarchy of linear objectives subject to a set of linear equality constraints into an equivalently weighted optimization problem.
	Inequality constraints are limited to non-negativity constraints on the variable vector.
	The authors in \cite{Kanoun2011} are the first to establish HLSP in its current form, which is characterized by the ability to handle feasible and infeasible inequality constraints on any priority level. This is enabled by slack variables which relax any infeasibility with respect to the $\ell_2$-norm. Each priority level is solved in sequence by identifying the optimal active set by the active-set method (ASM). The method is computationally unfavorable since on each priority level an increasingly larger problem needs to be solved since all active and inactive constraints from previous levels need to be considered. 
	The authors in \cite{escande2014} avoid this by projecting the constraints of the remaining priority levels into a basis of nullspace of the active constraints of previous priority levels. This effectively eliminates the active contraints, and with the right choice of nullspace basis also eliminates variables which have already been occupied in order to reach the optimal point of previous priority levels. The ASM is very efficient for problems with small changes of the active-set, or when the problem is well-conditioned, for example in parametric programs like robot control and planning where the active-set typically evolves slowly~\cite{Kuindersma2014}. Here, the solver can be warm-started and the solver converges after a few iterations, which involve solving equality constrained problem representing the active constraints of the current priority level. However, when the problem is badly conditioned, and large changes of the active set occur, the ASM can struggle to converge, for example due to phenomenons like cycling~\cite{Gill1989}. Instead, the work in \cite{pfeiffer2021} proposes a HLSP solver based on the interior-point method (IPM), which is more reliable than the ASM in such situations. The solver cannot be warm-started and each iteration is more costly than those of the ASM since all inactive inequality constraints need to be considered.
	In contrast, the solver in \cite{pfeiffer2024b} is based on the ADMM, which can be warm-started and can handle ill-posed problems due to proximal regularization terms~\cite{Parikh2014}. The ADMM requires fewer expensive matrix factorizations. This is more suitable for embedded applications where the system may not be equipped with enough computational power and matrix factorizations can be computed offline \cite{dang2015}.
	
	The previously mentioned works consider the primal formulation of hierarchical programs, while the dual has rarely been considered. One work to do so is presented in \cite{Sathya2021}, where the authors formulate the dual of a HLP. The dual is again a constrained linear program, where all priority levels can be solved at once, unlike solvers considering the primal formulation which solve the program in sequence. However, the dual HLP increases in the number of primal variables, which now also include the dual variables of each priority level of the primal problem (from here on these variables are referred to as \textit{primal-dual} variables, as they are dual variables in the primal problem, but primal variables in the dual problem).  Furthermore, the gradient of the dual HLP has not been formulated, which is necessary if HLP's were to be integrated as neurons in neural networks. The inclusion of convex optimization programs like quadratic programs (QP) in neural networks has been demonstrated to benefit the representation of more complex constraint relationships in the training data \cite{optnet}. 
	
	Hierarchical programming is not limited to HLP's or HLSP's: the work in \cite{pfeiffer2025} addresses hierarchical sparse quadratic programs (HSQP) where the objective function is formulated with respect to the $\ell_0$-norm. This way, hierarchical decision-making problems can be solved, for example in goal selection for humanoid robots.
	
	HLSP's and HSQP's are typically solved as sub-problems of either non-linear hierarchical least-squares or sparse quadratic programs (N-HLSP, or N-HSQP) in sequential programming type non-linear optimization methods. Examples include trust-region methods \cite{pfeiffer2023} with fixed trust-region radius, and hierarchical adaptations \cite{pfeiffer2024} of filter methods for sequential quadratic programming (SQP) \cite{fletcher2002b}.
	Unlike previous methods from LMOO for non-linear optimization~\cite{He2018}, non-linear feasible and infeasible inequality constraints are handled seamlessly by the slack formulation.
	Examples of applications for such non-linear problems are robot real-time control \cite{pfeiffer2023} and prioritized robot optimal control \cite{pfeiffer2024}.

	\section{Background on Hierarchical Least-Squares Programming}
	\label{sec:backgroundhlsp}
	This work addresses HLSP as LMOO's ($\lexmin$: lexicograhpically minimize) 
	\begin{align}
	\lexmin_{{x},{v}_{\bC_{\cup p}}}& \qquad \frac{1}{2}\Vert {v}_{\bC_1} \Vert^2_{\ell=2},\dots,\frac{1}{2}\Vert {v}_{\bC_p} \Vert^2_{\ell=2},  \label{eq:lexhlsp}\tag{Lex-HLSP}\\
	\text{s.t.}
	& \qquad A_{\bC_{\cup p}}{x} - b_{\bC_{\cup p}} \leqq {v}_{\bC_{\cup p}}\nonumber
	\end{align}
	$x\in\mathbb{R}^{n_x}$ are the primal variables. On each level $l=1,\dots,p$, the primal slack $v_{\bC_l}\in\mathbb{R}^{m_{\bC_l}}$ needs to be minimized with respect to the $\ell_2$-norm while the optimal slacks of previous levels $1,\dots,l-1$ needs to be maintained. 
	The slacks relax the linear constraints $\bC_l = \{\bE_l,\bI_l\}$ of each level $l=1,\dots,p$, which are comprised of the set of equality constraints $\bE_l$, and the set of inequality constraints $\bI_l$. The symbol $\leqq$ summarily represents both these constraint sets. The linear constraints $\bC_l$ are represented by the matrices $A_{\bC_l}\in\mathbb{R}^{m_{\bC_l}\times n_x}$ and the vectors $b_{\bC_l}\in\mathbb{R}^{m_{\bC_l}}$. The symbol $\cup l \coloneqq 1,\dots,p$ indicates the set of integers representing the priority levels from level 1 to $p$. The constraint set $\bC_{\cup p}$ is then the combined set $\bC_{\cup p}\coloneqq \{\bC_{1},\dots,\bC_{p}\}$. 
	
	To this date, no method has been conceptualized which can solve all priority levels of \ref{eq:lexhlsp} at once. Instead, the priority levels are treated in sequence from highest to lowest priority level, resulting in the following primal (P-) HLSP
	\begin{align}
	\mini_{ x,{v}_{\mathbb{C}_l}}& \qquad \frac{1}{2}\Vert {v}_{\mathbb{C}_l} \Vert^2_2 \qquad l=1,\dots,p \label{eq:hlsp}\tag{P-HLSP}\\
	\text{s.t.}
	& \qquad A_{\mathbb{C}_l} x - b_{\mathbb{C}_l} \leqq {v}_{\mathbb{C}_l}\nonumber\\
	& \qquad A_{\mA_{\cup l-1}} x - b_{\mA_{\cup l-1}} = {v}_{\mA_{\cup l-1}}^*\nonumber\\
	& \qquad A_{\mI_{\cup l-1}} x - b_{\mI_{\cup l-1}} \leq 0\nonumber
	\end{align}
	Each level $l=1,\dots,p$ is a feasible LSP. Either, the set of linear constraints $\bC_l$ are feasible and an optimal feasible point $v_{\bC_l}^* \leqq 0$ is identified. Otherwise, the linear constraints $\bC_l$ are infeasible, and are relaxed at an optimal infeasible point $\vert v_{\bC_l}^* \vert > 0$ in the least-squares sense. At the same time, the optimal feasible or infeasible points $\vert v_{\mA_{\cup l-1}}^*\vert \geq 0$ of higher priority levels $1$ to $l-1$ need to be maintained. The active set $\mathcal{A}_{\cup l-1}$ contains all equality constraints $\bE_{\cup l-1}$, and violated inequality constraints in $\bI_{\cup l-1}$. The remaining inequality constraints from higher priority levels, which were feasible at convergence of the LSP's from levels 1 to $l-1$, are summarized in $\mathcal{I}_{\cup l-1}$ and need to be respected. 
	
	Such a sequential treatment of the hierarchy is very efficient, since after convergence of each priority level $l$, identified active constraints from this level can be eliminated from the remaining priority levels. The approach in~\cite{escande2014} proposed to project the remaining levels' LSP's into a basis of nullspace $N_{l-1}$ of the active constraints of lower priority levels $\mathcal{A}_{\cup l-1}$ with $A_{\mathcal{A}_{\cup l-1}}N_{l-1} = 0$ and $N_{0}=I\in\mathbb{R}^{n_x\times n_x}$. An appropriate choice of nullspace basis $N_{l-1}\in\mathbb{R}^{n_x\times n_{x}^{r,l}}$ carries the additional advantage that the LSP's become progressively smaller with a reduced number of remaining projected variables $n_{x}^{r,l} \leq n_x$ on each level $l=2,\dots,p$. The rank of the active constraint matrix $A_{\mathcal{A}_{\cup l-1}}$ is $n_x - n_{x}^{r,l}$. The projected primal variable on each level is $x_l \in \mathbb{R}^{ n_{x}^{r,l}} = N_lx$. Furthermore, the projection eliminates the primal-dual Lagrange multipliers $\lambda_{l,\cup l-1}\in\mathbb{R}^{n_l^{dual}}$ from each level $l$'s LSP, which relate the LSP to linear constraints $\bC_{\cup l-1}$ from previous levels $1$ to $l-1$. 
	$n_{l}^{dual}$ is the sum of number of constraints of priority levels 1 to $l-1$:
	\begin{align}
	n_{l}^{dual} = \sum_{k=1}^{l-1} m_{\bC_l} \nonumber
	\end{align} 
	The overall set of primal-dual variables is defined as \cite{escande2014}
	\begin{align}
	\Lambda_{p-1} = \BIN 0 & \lambda_{2,\bC_1} & \dots & \lambda_{p-1,\bC_1} \\
	0 & 0 & \dots &  \lambda_{p-1,\bC_2}\\
	\vdots & \vdots & \ddots & \vdots\\
	0 & 0 & \dots &  \lambda_{p-1,\bC_{p-2}}\\
	\label{eq:primaldual}\tag{primal-dual}
	\BOUT \in\mathbb{R}^{n_{p-1}^{dual} \times p-1}
	\end{align}
	The non-zero entries of each column $l=2,\dots,p-1$ of $\Lambda_{p-1}$ correspond to $\lambda_{l,\cup l-1}$. The matrix $\Lambda_{p-1}$ omits the Lagrange multipliers $\lambda_{l,l}$. They correspond to the slack variables $\lambda_{l,\bC_l} = v_{\bC_l}$ and are simultaneously both primal and dual variables \cite{escande2014}.
	
	The primal and primal-dual solutions $x^*$ and $\Lambda_{p-1}^*$ obtained from solving the primal formulation of \ref{eq:hlsp} are not differentiable with respect to the HLSP parameters $A_{\bC_{\cup p}}$ and $b_{\bC_{\cup p}}$. This is because it is not possible to formulate a Lagrangian function  of all priority levels' LSP's at once. This prevents the application to distributed optimization or the usage of HLSP's as neurons in neural networks. Here, we propose a continuous and differentiable dual formulation of HLSP.

	\section{Dual formulation of hierarchical least-squares programming}
	\label{sec:d-hlsp}
	
	In the following, the dual formulation of \ref{eq:lexhlsp} is derived. It can be seen that this results in a QCLSP as stated in \cite{Sathya2021}, which is differentiable, as is demonstrated in App. \ref{sec:diffdhlsp}.  
	
	Following Lagrangian function is assumed for some level $p$ of a dual HLSP:
	\begin{align}
	\mathcal{L}_{p}({x},v_{\bC_{\cup p}},\lambda_{p,\bC_{\cup p}},\Lambda_{p-1},\eta_{p,\cup p-1},\theta_{p,\cup p-1}) &= \frac{1}{2}\Vert {v}_{\bC_p} \Vert^2_2 + \sum_{l=1}^{p} \mu_{p,\bC_l}^T(A_{\bC_l}{x} - b_{\bC_l} - {v}_{\bC_l} + {w}_{\bC_l}) \label{eq:d-hlsp-lag}\\
	&{\color{blue}+\sum_{l=1}^{p-1}  \eta_{p,l}^T\left(A_{\bC_l}^Tv_{\bC_l}  +  A_{\bC_{\cup l-1}}^T\lambda_{l,\bC_{\cup l-1}}\right)} \nonumber\\
	&{\color{blue}+ \sum_{l=1}^{p-1}\theta_{p,l} \left({v}_{\bC_l}^T({v}_{\bC_l} + b_{\bC_l}) +  \lambda_{l,\bC_{\cup l-1}}^Tb_{\bC_{\cup l-1}} + \sum_{k=1}^{l-1}\theta_{l,k}v_{\bC_k}^Tv_{\bC_k}\right)} \nonumber
	\end{align}
	$\mu_{p,\bC_{\cup p}}\in\mathbb{R}^{n_{p}^{dual}}$ are the dual variables of level $p$ with respect to the linear constraints $\bC_{\cup p}$ of levels 1 to $p$.
	Components specific to the dual HLSP formulation are marked in blue and their origin becomes apparent throughout this derivation. Importantly, the primal-dual variables $\lambda_{l,\bC_{\cup l-1}}\in\mathbb{R}^{n_{l}^{dual}}$  link internally each priority level $l=2,\dots,p-1$ with their respective higher priority ones $1,\dots,l-1$. In the primal HLSP, on each level's LSP they are the dual variables $\mu_{l,\bC_{\cup l-1}}$ with respect to their respective higher priority levels, but in the dual HLSP, they are treated as primal variables $\lambda_{l,\bC_{\cup l-1}}$ on any priority level except the lowest priority level $p$. $\eta_{p,l}$ and $\theta_{p,l}$ with $l=1,\dots, l-1$ are Lagrange multipliers with respect to constraints specifically arising from the dual formulation. Similarly to the Lagrange multipliers $\lambda_{l,\bC_{\cup l-1}}$, the variables $\theta_{l,k}$ are first dual variables on the levels $l=2,\dots,p-1$ with $k=1,\dots,l-1$, but later primal-dual variables on level $p$. The following Lagrange mulitpliers are linked to inequality constraints ($\leq$) and are subject to non-negativity conditions:
	\begin{itemize}
		\itemsep0pt
		\item $\mu_{\bI_{\cup p}}\geq 0$
		\item $\lambda_{l,\bI_{\cup l-1}}\geq 0$ (with $l=2,\dots,p-1$)
		\item $\theta_{l,k} \geq 0$ (with $l=2,\dots,p$, $k=1,\dots,l-1$)
	\end{itemize}
	The first-order optimality (Karush-Kuhn-Tucker, KKT) conditions of the Lagrangian \eqref{eq:d-hlsp-lag} with respect to the primal variables are
	\begin{align}
	\nabla_{{x}} \mathcal{L}_p =& \sum_{l=1}^{p} A_{\bC_l}^T\mu_{p,\bC_l} = 0\label{eq:kkt1}\\
	\nabla_{v_{\bC_p}} \mathcal{L} =& v_{\bC_p} - \mu_{p,\bC_p} = 0 \label{eq:lpIsVp}\\
	\nabla_{v_{\bC_l}} \mathcal{L}_p =& -\mu_{p,\bC_l} + \theta_{p,l}(2{v}_{\bC_l} + b_{\bC_l}) + A_{\bC_l}\eta_{l} + 2\theta_{p,l}\sum_{k=1}^{l-1}\theta_{l,k}v_{\bC_k}^Tv_{\bC_k}= 0\qquad\qquad l = 1,\dots,p-1\\
	\nabla_{\theta_{l,k}} \mathcal{L}_p =& \theta_{p,l}v_{\bC_k}^Tv_{\bC_k}= 0 \qquad\qquad l = 2,\dots,p-1,\qquad k =1,\dots,l-1 \label{eq:4thorderelim}\\
	\nabla_{\lambda_{l,\bC_{\cup l-1}}} \mathcal{L}_p =& \theta_{p,l}b_{\bC_{\cup l-1}} + A_{\bC_{\cup l-1}}\eta_{p,l} = 0 \qquad\qquad l = 1,\dots,p-1
	\end{align}
	The condition \eqref{eq:lpIsVp} demonstrates how the variables $v_{\bC_p} = \mu_{p,\bC_p}$ are both primal and dual variables at the same time. 
	The KKT conditions with respect to the dual variables are
	\begin{align}
	\nabla_{\mu_{p,\bC_{\cup p-1}}} \mathcal{L}_p =& A_{\bC_{\cup p-1}}x - b_{\bC_{\cup p-1}} - v_{\bC_{\cup p-1}}=0\label{eq:kkt2}\\
	\nabla_{\eta_{p,l}} \mathcal{L}_p =& A_{\bC_l}^Tv_{\bC_l}  +  A_{\bC_{\cup l-1}}^T\lambda_{l,\bC_{\cup l-1}}=0\qquad l = 1\dots,p-1\label{eq:keta}\\
	\nabla_{\theta_{p,l}} \mathcal{L}_p =& {v}_{\bC_l}^T({v}_{\bC_l} + b_{\bC_l}) +  \lambda_{l,\bC_{\cup l-1}}^Tb_{\bC_{\cup l-1}} + \sum_{k=1}^{l-1}\theta_{l,k}v_{\bC_k}^Tv_{\bC_k}= 0 \qquad l = 1,\dots p-1\label{eq:thirorderterm}
	\end{align}
	Notably, the fourth-order coupling $2\theta_{p,l}\sum_{k=1}^{l-1}\theta_{l,k}v_{\bC_k}^Tv_{\bC_k}$ does not further occur, since it has been eliminated by condition \eqref{eq:4thorderelim}. 
	
	Furthermore, non-negativity  and complementary constraints on the dual and primal-dual variables associated with inequality constraints $\bI$ need to be respected
	\begin{align}
	&\mu_{p,\bI_{\cup p}} \geq 0 \qquad \text{and} \qquad \mu_{p,\bI_{\cup p}}\odot (A_{\bI_{\cup p}}x - b_{\bI_{\cup p}} - v_{\bI_{\cup p}})\nonumber
	\end{align} 
	The primal KKT conditions are inserted into $\mathcal{L}_p$, which results in the dual function of level $p$
	\begin{align}
	g_p({v}_{\mathbb{C}_{\cup p}},\lambda_{p,\bC_{\cup p-1}}) &=  -\frac{1}{2}\Vert {v}_{\mathbb{C}_p} \Vert^2_2  - {v}_{\mathbb{C}_p}^Tb_{\mathbb{C}_p} -  \lambda_{p,\bC_{\cup p-1}}^T b_{\bC_{\cup p-1}} -\sum_{l=1}^{p-1} \theta_{p,l} {v}_{\bC_{l}}^T{v}_{\bC_{l}}
	\end{align}

	The dual variables $\mu_{p,\bC_{\cup p-1}}$ of level $p$ now become the primal variables $\lambda_{p,\bC_{\cup p-1}}$ of the dual problem (similarly for $\theta_{p,l}$, $l=1,\dots,p-1$). The dual problem is given in the following, while the optimality condition \eqref{eq:keta} has been introduced as an explicit constraint \cite{boyd2004}
	\begin{align}
	\max. \quad&  g_p({v}_{\mathbb{C}_{\cup p}},\lambda_{p,\bC_{\cup p-1}}) \label{eq:dualproblem}\\
	\text{s.t} \quad& 
	A_{\bC_l}^Tv_{\bC_l}  +  A_{\bC_{\cup l-1}}^T\lambda_{l,\bC_{\cup l-1}} = 0\qquad l = 1,\dots,p-1\nonumber\\
	&{v}_{\bC_l}^T({v}_{\bC_l} + b_{\bC_l}) +  \lambda_{l,\bC_{\cup l-1}}^Tb_{\bC_{\cup l-1}} + \sum_{k=1}^{l-1}\theta_{l,k}v_{\bC_k}^Tv_{\bC_k} \leq 0\qquad l = 1,\dots,p-1\nonumber\\
	&\theta_{l,k} \geq 0\qquad l = 1,\dots,p,\qquad k = 1,\dots,l-1\nonumber\\
	& {\lambda_{p,\bI_{\cup p-1}}\odot ( A_{\bI_{\cup p-1}}{x} - b_{\bC_{\cup p-1}} - {v}_{\bI_{\cup p-1}}) = 0 }\nonumber\\
	&\lambda_{p,\bI_{\cup p-1}} \geq 0 \nonumber
	\end{align}
	The primal objective is lower bounded by the dual objective $\frac{1}{2} v_{\bC_p}^{*,T}v_{\bC_p}^{*} \geq g_p({v}_{\mathbb{C}_{\cup p}}^{*},\lambda_{p,\bC_{\cup p-1}}^{*})$ \cite{boyd2004}
	\begin{align}
	v_{\bC_p}^{*,T}v_{\bC_p}^{*} 	  + {v}_{\mathbb{C}_p}^{*,T}b_{\mathbb{C}_p} +  \lambda_{p,\bC_{\cup p-1}}^{*,T} b_{\bC_{\cup p-1}} +\sum_{l=1}^{p-1} \theta_{p,l}^* {v}_{\bC_{l}}^{*,T}{v}_{\bC_{l}}^{*}\geq& 0	\label{eq:weakduality}
	\end{align}
	It can be observed that strong duality (strict equality in \eqref{eq:weakduality}) holds if the constraints $\bC_l$ are feasible with $v_{\bC_l}^*=0$ and $\lambda_{l,\bC_{\cup l-1}}^*=0$, which follows from \eqref{eq:keta}.
	After some algebraic reformulations and using the relationship $\hat{b}_{\mathbb{C}_l}= \frac{1}{2}b_{\mathbb{C}_l}$, this can be rewritten to standard quadratic form
	\begin{align}
	& \qquad ({v}_{\mathbb{C}_l}^{*} + \hat{b}_{\mathbb{C}_l})^T({v}_{\mathbb{C}_l}^{*} + \hat{b}_{\mathbb{C}_l}) - \hat{b}_{\mathbb{C}_l}^T\hat{b}_{\mathbb{C}_l}  +  \lambda_{l,\bC_{\cup l-1}}^{*,T} b_{\bC_{\cup l-1}} { + \sum_{k=1}^{l-1}\theta_{l,k}^{*} {v}_{\bC_{k}}^{*,T}{v}_{\bC_{k}}^*}  \geq 0
	\label{eq:weakduality_quad}
	\end{align}
	From this formulation it becomes apparent that the reverse of the duality condition \eqref{eq:weakduality} is a convex quadratic constraint. The authors in \cite{Sathya2021} proposed to  transfer this constraint and the constraints from the dual problem~\eqref{eq:dualproblem} to the primal problem of the next priority level $p\leftarrow p+1$. This enforces zero-duality gap and therefore optimiality of previous priority levels 1 to $p-1$. The resulting dual HLSP \eqref{eq:d-hlsp} writes as
	\begin{align}
	\mini_{{x},{v}_{\bC_{\cup p}},\Lambda_{ p-1},{\color{cyan}\theta_{2,1},\dots,\theta_{p-1,p-2}}}& \qquad \frac{1}{2}\Vert {v}_{\bC_p} \Vert^2_2  \label{eq:d-hlsp}\tag{D-HLSP-E}\\
	\text{s.t.}
	\qquad&  A_{\bC_{\cup p}}{x} - b_{\bC_{\cup p}} \leqq {v}_{\bC_{\cup p}} \\
	& A_{\bC_l}^T{v}_{\bC_l} + A_{\bC_{\cup l-1}}^T\lambda_{l,\bC_{\cup l-1}}= 0 \qquad l = 1,\dots,p-1\label{eq:ATL}\\
	&  ({v}_{\mathbb{C}_l} + \hat{b}_{\mathbb{C}_l})^T({v}_{\mathbb{C}_l} + \hat{b}_{\mathbb{C}_l}) - \hat{b}_{\mathbb{C}_l}^T\hat{b}_{\mathbb{C}_l}  + \lambda_{l,\bC_{\cup l-1}}^T b_{\bC_{\cup l-1}} {\color{cyan} +\sum_{k=1}^{l-1} \theta_{l,k} {v}_{\bC_{k}}^T{v}_{\bC_{k}}}  \leq 0 \qquad l = 1,\dots,p-1\label{eq:qc}\\
	&{\color{cyan}\theta_{l,k} \geq 0\qquad l = 2,\dots,p-1,\qquad k = 1,\dots,l-1}\\
	& {\color{purple}\lambda_{p-1,\bI_{\cup p-2}}\odot ( A_{\bI_{\cup p-2}}{x} - b_{\bC_{\cup p-2}} - {v}_{\bI_{\cup p-2}}) = 0}\label{eq:Kcomplementary}\\
	& {\color{violet}\lambda_{p-1,\bI_{\cup p-2}} \geq 0 }\label{eq:Knonneg}
	\end{align}
	Notably, for any $l>2$ and with inequality constraints on any level 1 to $l-2$, a non-convexity is introduced by the complementary condition \eqref{eq:Kcomplementary} on the primal-dual variables $\lambda_{l-1,\bI_{\cup l-2}}$ (colored in purple). In this work, the focus is shifted to equality only constrained HLSP's, such that both \eqref{eq:Kcomplementary} and the non-negativity constraint \eqref{eq:Knonneg} are not further considered.
	
	Another non-convexity is introduced by the duality constraint \eqref{eq:qc}, which includes a positive third-order term $\sum_{k=1}^{l-1}\theta_{l,k} {v}_{\bC_{k}}^T{v}_{\bC_{k}}$ (in purple, $l=1,\dots,p-1$, $k=1,\dots,l-1$). However, it can be observed that the multiplier $\theta_{l,k}$ is uncoupled since it does not appear in any other constraint or the objective function. The optimality condition~\eqref{eq:4thorderelim} contains the only occurrence of $\theta_{l,k}$, and $\theta_{l,k}$ can therefore be chosen freely. Given non-negativity of the term $\sum_{k=1}^{l-1}\theta_{l,k} {v}_{\bC_{k}}^T{v}_{\bC_{k}}\geq 0$ with $\theta_{l,k}\geq 0$, the choice $\theta_{l,k}=0$ favors strong duality (strict equality in \eqref{eq:weakduality}), while $\theta_{l,k}>0$ would increase the duality gap. 
	
	With the above simplifications, \ref{eq:d-hlsp} becomes a convex QCLSP as all colored terms are ignored. 
	It can be confirmed that the Lagrangian given in~\eqref{eq:d-hlsp-lag} is the Lagrangian of~\ref{eq:d-hlsp}, where $\eta_{p,l}$ and $\theta_{p,l}$ with $l = 1,\dots,p-1$ are the Lagrange multipliers associated with the constraints in \eqref{eq:ATL} and \eqref{eq:qc}.
	
	Finally, it remains to be seen whether the duality constraint \eqref{eq:qc} is feasible. In the following, $x$ is constructed with sub-variables $z_{l=1,\dots,p}$ projected into a basis of nullspace of constraint matrices of previous priority levels.
	\begin{align}
	x = Z_0(z_1 + Z_1(z_2 + Z_2(z_3 + \dots))) = \sum_{l=1}^p N_{\cup{l-1}}z_l
	\end{align}
	with $Z_{0} = I$, $A_{\bE_l}N_{\cup l-1}Z_{l} = 0$ and $N_{\cup l} = \Pi_{k=1}^l Z_{k}$ with $l=1,\dots,p$.
\begin{theorem}
	Strong duality holds for the equality constrained HLSP \eqref{eq:d-hlsp}. 
\end{theorem}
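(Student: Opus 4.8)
The plan is to show that the weak-duality inequality \eqref{eq:weakduality} is tight, which by \eqref{eq:weakduality_quad} is equivalent to the quadratic duality constraint \eqref{eq:qc} being attainable with equality on every level $l=1,\dots,p-1$. First I would record the algebraic reduction that makes this tractable: setting $\theta_{l,k}=0$ and substituting the primal feasibility relations $b_{\bC_k}=A_{\bC_k}x-v_{\bC_k}$ (valid for every $k\le l$ since the equality constraints in \eqref{eq:d-hlsp} hold exactly) into \eqref{eq:qc} at equality, then invoking the stationarity constraint \eqref{eq:ATL}, collapses \eqref{eq:qc} to the orthogonality condition $\lambda_{l,\bC_{\cup l-1}}^T v_{\bC_{\cup l-1}}=0$, i.e. precisely the dual KKT condition \eqref{eq:thirorderterm}. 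Hence it suffices to exhibit a single feasible triple $(x,v_{\bC_{\cup p}},\Lambda_{p-1})$ for \eqref{eq:d-hlsp} at which this orthogonality holds on each level.

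Next I would construct such a point through the nullspace cascade $x=\sum_{l=1}^{p} N_{\cup l-1} z_l$ and solve the hierarchy level by level: given $z_1,\dots,z_{l-1}$, pick $z_l$ as the least-squares minimizer of $\tfrac12\Vert A_{\bC_l}\sum_{k\le l} N_{\cup k-1}z_k - b_{\bC_l}\Vert^2$. Because $A_{\bE_k}N_{\cup k-1}Z_k=0$ forces $A_{\bC_k}N_{\cup l-1}=0$ for every $k<l$, varying $z_l$ leaves all higher-priority residuals $v_{\bC_k}^*$ unchanged, so the cascade is well defined and each $v_{\bC_l}^*$ is the genuine hierarchical residual. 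The normal equations of the projected level-$l$ problem, $N_{\cup l-1}^T A_{\bC_l}^T v_{\bC_l}^*=0$, are exactly the projection of \eqref{eq:ATL} onto the nullspace basis; lifting them back recovers a multiplier $\lambda_{l,\bC_{\cup l-1}}^*$ lying in the range of $A_{\bC_{\cup l-1}}$ that closes the stationarity constraint \eqref{eq:ATL}.

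It then remains to verify the orthogonality. Each level-$l$ subproblem is a convex quadratic least-squares program whose only coupling constraints are affine equalities; for such programs strong duality holds as soon as the problem is feasible, with no Slater condition required for affine constraints, and a feasible minimizer always exists. The vanishing of $(\lambda_{l,\bC_{\cup l-1}}^{*})^T v_{\bC_{\cup l-1}}^*$ is just the zero-duality-gap statement for that level, which one also sees directly: the hierarchical residuals $v_{\bC_k}^*$ are least-squares residuals and hence orthogonal to the range of $A_{\bC_k}$, while the recovered $\lambda_{l,\bC_{\cup l-1}}^*$ was chosen in that range. Chaining this across $l=1,\dots,p-1$ makes \eqref{eq:qc} an equality on every level, tightens \eqref{eq:weakduality}, and establishes strong duality; the feasible case $v_{\bC_l}^*=0$, $\lambda_{l,\bC_{\cup l-1}}^*=0$ already noted after \eqref{eq:weakduality} is the degenerate instance.

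The hard part is not any isolated level---an equality-constrained LSP trivially has zero gap---but the coupling: producing one vector $x$ together with multipliers $\lambda_{l,\bC_{\cup l-1}}^*$ that simultaneously satisfy all stationarity constraints \eqref{eq:ATL} and all quadratic constraints \eqref{eq:qc}. The nullspace cascade is the device that decouples the levels and guarantees this consistency, and the delicate bookkeeping is to ensure the multipliers extracted from the projected normal equations are lifted into the range of the higher-level constraint matrices, so that they annihilate the range-orthogonal optimal residuals. The rank-deficient, over-determined levels---where $v_{\bC_l}^*\neq 0$ and the multipliers are non-unique---are where this choice must be made carefully.
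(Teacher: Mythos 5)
Your overall strategy coincides with the paper's: both proofs construct the optimum level by level through the nullspace cascade $x=\sum_l N_{\cup l-1}z_l$, invoke strong duality of each level's feasible affine-equality-constrained QP, and read off the multipliers $\lambda_{l,\bC_{\cup l-1}}$ from the stationarity condition \eqref{eq:kkt1}/\eqref{eq:ATL}. The divergence is in how the per-level zero-gap identity is reconciled with the constraint \eqref{eq:qc}: the per-level identity reads $v_{\bC_l}^{*,T}(v_{\bC_l}^*+b_{\bC_l})+\lambda_{l,\bC_{\cup l-1}}^{*,T}b_{\bC_{\cup l-1}}+\lambda_{l,\bC_{\cup l-1}}^{*,T}v_{\bC_{\cup l-1}}^*=0$, so the cross term $\lambda_{l,\bC_{\cup l-1}}^{*,T}v_{\bC_{\cup l-1}}^*$ must be accounted for. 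The paper keeps the free nonnegative multipliers $\theta_{l,k}$ and chooses them to absorb this term; you instead set $\theta_{l,k}=0$ and claim the cross term vanishes by orthogonality.

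That orthogonality claim is where your argument has a genuine gap. You assert that each hierarchical residual $v_{\bC_k}^*$ is ``orthogonal to the range of $A_{\bC_k}$,'' i.e.\ $A_{\bC_k}^Tv_{\bC_k}^*=0$. This is true only for $k=1$, where the least-squares problem is unconstrained over all of $x$. For $k\geq 2$ the minimization is only over $z_k$ in the reduced space, so the normal equations give merely $N_{\cup k-1}^TA_{\bC_k}^Tv_{\bC_k}^*=0$; the full product is $A_{\bC_k}^Tv_{\bC_k}^*=-A_{\bC_{\cup k-1}}^T\lambda_{k,\bC_{\cup k-1}}^*$ by \eqref{eq:ATL}, which is generically nonzero whenever level $k$ is infeasible. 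Hence for $l\geq 3$ a multiplier $\lambda_{l,\bC_{\cup l-1}}^*$ ``lifted into the range of $A_{\bC_{\cup l-1}}$'' does not annihilate $v_{\bC_{\cup l-1}}^*$: writing $\lambda_{l,\bC_{\cup l-1}}^*=A_{\bC_{\cup l-1}}y$ gives $\lambda_{l,\bC_{\cup l-1}}^{*,T}v_{\bC_{\cup l-1}}^*=y^T\sum_{k<l}A_{\bC_k}^Tv_{\bC_k}^*=-y^T\sum_{k=2}^{l-1}A_{\bC_{\cup k-1}}^T\lambda_{k,\bC_{\cup k-1}}^*$, which you cannot conclude is zero. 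Your argument therefore only establishes \eqref{eq:qc} with equality for $l=1,2$ (i.e.\ for hierarchies with $p\leq 3$). To repair it you either need the paper's device --- retain $\theta_{l,k}\geq 0$ and choose it so that $\sum_k\theta_{l,k}v_{\bC_k}^{*,T}v_{\bC_k}^*$ equals the cross term (which additionally requires arguing that the cross term is nonnegative, a point the paper also leaves implicit) --- or a genuinely finer selection of $\lambda_{l,\bC_{\cup l-1}}^*$ within the affine solution set of \eqref{eq:ATL} that enforces $\lambda_{l,\bC_{\cup l-1}}^{*,T}v_{\bC_{\cup l-1}}^*=0$, which is not always possible without further analysis.
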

\begin{proof}
	The proof proceeds by establishing strong duality from highest to lowest priority levels $l=1,\dots,p$, effectively solving the primal HLSP \eqref{eq:hlsp}. The  first priority level writes as
	\begin{align}
	\mini_{{z_1},{v}_{\mathbb{E}_1}}& \qquad \frac{1}{2}\Vert {v}_{\mathbb{E}_1} \Vert^2_2  \label{eq:l1}\\
	\text{s.t.}
	& \qquad A_{\mathbb{E}_1}{z_1} - b_{\mathbb{E}_1} = {v}_{\mathbb{E}_1}\nonumber
	\end{align}
	Problem~\eqref{eq:l1} is a QP with affine equality constraints, which are feasible due to relaxation by the slack variable $v_{\bE_1}$.
	This is a sufficient condition for strong duality~\cite{boyd2004}.
	\begin{align}
	{v}_{\mathbb{E}_1}^{*,T}({v}_{\mathbb{E}_1}^* + b_{\mathbb{E}_1})=0\nonumber
	\end{align} 
	Furthermore, $A_{\mathbb{E}_1}^T{v}_{\mathbb{E}_1}^* = 0$~\eqref{eq:kkt1} holds at optimality. The optimal point $(z_1^*, v_{\bE_1}^*)$ is kept constant hereinafter.
	\ref{eq:d-hlsp} of the second level in dependence of $z_2$ and $v_{\bE_2}$ writes as
	\begin{align}
	\mini_{z_2,{v}_{\mathbb{E}_2}}& \qquad \frac{1}{2}\Vert {v}_{\mathbb{E}_2} \Vert^2_2  \nonumber\\
	\text{s.t.}
	& \qquad A_{\mathbb{E}_1}z^*_1 - b_{\mathbb{E}_1} = {v}_{\mathbb{E}_1}^*\\
	& \qquad A_{\mathbb{E}_2}(z_1^* + N_{\bE_1}z_2) - b_{\mathbb{E}_2} = {v}_{\mathbb{E}_2}\nonumber\\
	& \qquad {v}_{\mathbb{E}_1}^{*,T}({v}_{\mathbb{E}_1}^* + b_{\mathbb{E}_1})  = 0 \nonumber\\
	& \qquad A_{\mathbb{E}_1}^T{v}_{\mathbb{E}_1}^* = 0 \quad \nonumber
	\end{align}
	The quadratic equality constraint is constant and can therefore be ignored. 
	Feasibility of the  affine equality constraints follows from primal feasibility of the first level, and relaxation by the slack variables $v_{\bE_2}$. This is a sufficient condition for strong duality of level~2:
	\begin{align}
	& \qquad {v}_{\mathbb{E}_2}^{*,T} ({v}_{\mathbb{E}_2}^* + b_{\mathbb{E}_2}) + \lambda_{2,\mathbb{E}_1}^{*,T}b_{\mathbb{E}_1} + \theta_{2,1}^* {v}_{\mathbb{E}_1}^{*,T}{v}_{\mathbb{E}_1}^*  = 0\nonumber
	\end{align}
	 The primal-dual variable / Lagrange multiplier $\lambda^*_{2,\bE_1}\leftarrow\mu^*_{2,\bE_1}$result from the KKT condition with respect to $x$~\eqref{eq:kkt1} (note that while $z_1^*$ is constant, $x$ itself is not).
	 $\theta_{2,1}^*\geq0$ is a free variable and has been chosen appropriately such that the above equality holds. 
	 Following this pattern for all priority levels 1 to $p$, strong duality
	\begin{align}
	v_{\bC_l}^{*,T}(v_{\bC_l}^{*} 	  + b_{\mathbb{C}_l}) +  \lambda_{l,\bC_{\cup l-1}}^{*,T} b_{\bC_{\cup l-1}} +\sum_{k=1}^{l-1} \theta_{l,k}^* {v}_{\bC_{k}}^{*,T}{v}_{\bC_{k}}^* = 0\nonumber \qquad l=1,\dots,p-1
	\end{align}
	follows for \ref{eq:d-hlsp}.
\end{proof}

	\section{An efficient ADMM for solving \ref{eq:d-hlsp}}
	\label{sec:hadmm}
	
	\begin{algorithm}[h!]
		\caption{\tt ADMM based solver for \ref{eq:d-hlsp}}
		\begin{algorithmic}[1]
			\Statex \textbf{Input:} $A_{\bE_{\cup p}}$, $b_{\bE_{\cup p}}$, $x^{(0)}$, $v_{\bE_{\cup p}}^{(0)}$ $\Lambda_{p-1}^{(0)}$, $\eta_{p,{\cup p-1}}^{(0)}$, $\phi_{p,{\cup p-1}}^{(0)}$, $\nu_{l,\cup l-1}^{(0)}$ (with $l = 2,\dots,p-1$)
			\Statex \textbf{Output:} $x^*$, $\Lambda^*$ 
			\Statex\textbf{Variables:} $i=0$
			\State Precondition $A_{\bE_{\cup p}}$, $b_{\bE_{\cup p}}$ (Sec.~\ref{sec:precond})
			\State Factorize $K_x^{\rho}$~\eqref{eq:Kx}
			\State Compute $M_l^{-1}$ for $l = 1, \dots,p-2$ (Sec.~\ref{sec:subpd})
			\While{$\Vert k_{prim,dual} \Vert_2 \geq \chi$}
			\State Update $x^{(i+1)}$ by solving \eqref{eq:Kx}
			\State Update $v_{\bE_{\cup p-1}}^{(i+1)}$ (Sec.~\ref{sec:subpd}), $v_{\bE_p}^{(i+1)}$~\eqref{eq:subvp}
			\State Update $\Lambda_{p-1}^{(i+1)}$ \eqref{eq:comppd} 
			\State Update $z_{\cup p-1}^{(i+1)}$, $\tilde{\lambda}$ by solving \ref{eq:qcproj}
			\State Update dual Lagrange multipliers (Sec.~\ref{sec:dualascent})
			\State Update $k_{prim,dual}^{(i+1)}$~\eqref{eq:kprimdual}
			\State Update $\rho^{(i+1)}$ (Sec.~\ref{sec:updaterho})
			\If{$\rho^{(i+1)} > 5\rho^{(i)}$ or $\rho^{(i+1)} < \frac{1}{5}\rho^{(i)}$}
			\State Re-factorize $K_x^{\rho}$~\eqref{eq:Kx}
			\EndIf
			\State $i \leftarrow i + 1$
			\EndWhile 
			\State Reverse equilibration of the variables \eqref{eq:equVar}
		\end{algorithmic}
		\label{alg:hadm}
	\end{algorithm}
	In App.~\ref{app:ipm}, it is demonstrated that using the IPM to solve \ref{eq:d-hlsp} leads to a Newton system where the primal-dual Lagrange multipliers $\Lambda_{p-1}$ cannot be eliminated. This is expensive as in each solver iteration, a square matrix of dimension $n + \sum_{l=2}^{p-1} n_{l}^{dual}$ needs to be factorized. It appears to be critical for computational efficiency that this is avoided as the computational effort of matrix factorizations grows cubically in the number of matrix dimensions. 
	In the following, it is demonstrated how this can be avoided by designing an ADMM based solver where the resulting substituted KKT system is only dependent on the primal variables $x$. 
	Algorithm Alg. \ref{alg:hadm} summarized following steps of the solver. First, Sec. \ref{sec:splitvars} introduces different split variables both to cast \ref{eq:d-hlsp} into ADMM format, but also to enhance computational efficiency. This is exploited in the computation of the primal update Sec.~\ref{sec:primupdate} where the primal-dual variables are substituted in order to avoid their occurrence in expensive matrix factorizations. In Sec.~\ref{sec:updatesplit}, it is described how the quadratic constraint \eqref{eq:qc} is treated in a computationally efficient way, where the main computational step consists of solving a third-order polynomial.
	The choice of partial pre-conditioning preserves this computational advantage, as otherwise a more expensive but factorization-free IPM solver (see App.~\eqref{app:ipmepsilon}) would need to be employed (Sec. \ref{sec:precond}).
	Sections \ref{sec:dualascent}, \ref{sec:updaterho} and \ref{sec:overrelax} recall ADMM specific elements for the dual and parameter updates, and over-relaxed steps, respectively.
	
	\subsection{Split variables for computational efficiency}
	\label{sec:splitvars}
	The ADMM is an optimization method which solves convex optimization problems of the form \cite{boyd2011}
	\begin{align}
	\mini_{q,s} \qquad &f(q) + d(s)\\
	& Bq + Cs = c\nonumber\\
	& s\in \mathcal{C}\nonumber
	\end{align}
	Both $f(q)$ and $d(s)$ are convex functions
	The variable $s$ is referred to as the split variable, which splits the original variable $q$ into two parts. The split variable $s$ is contained within the convex and closed constraint set $\mathcal{C}$ \cite{osqp}. In this work, three different splitting variables are introduced as follows
	\begin{itemize}
		\item $z_{\bE_{\cup p-1}}$: This split variable encapsulates the quadratic constraints~\eqref{eq:qc} of each level $l=1,
		\dots,p-1$, which now write as
		\begin{align}
		& \qquad z_{\bE_l}^Tz_{\bE_l} - \hat{b}_{\bE_l}^T\hat{b}_{\bE_l}  +  \tilde{\lambda}_{l,\bE_{\cup l-1}}^T b_{\bE_{\cup l-1}}  \leq 0\nonumber
		\end{align}
		Note that unlike~\cite{Krupa2024,Nguyen2025}, this constraint contains an additional linear term in $\tilde{\lambda}_{l,\bE_{\cup l-1}}$, yet adhering to the standard format of quadratic constraints in convex QCQP's~\cite{Anstreicher2012}.
		This operator splitting introduces the additional linear constraints
		\begin{align}
		z_{\bE_{\bE_{\cup p-1}}} = {v}_{\bE_{\bE_{\cup p-1}}} + \hat{b}_{\bE_{\bE_{\cup p-1}}}\nonumber
		\end{align}
		\item $\tilde{x}$: This split variable renders the KKT system positive-definite / invertible \cite{osqp}. The linear constraint $x = \tilde{x}$ is introduced.
		\item $\tilde{\lambda}_{l,\bE_{\cup l-1}}$ with $l=2,
		\dots,p-1$: Similarly to $\tilde{x}$, this split variable acts as regularization. It decouples the primal-dual variables ${\lambda}_{l,\bE_{\cup l-1}}$ of each level $l$ from the ones of the higher priority levels ${\lambda}_{k,\bE_{\cup k-1}}$ with $k=1,\dots,l-1$. This provides the basis for the elimination of the primal-dual variables from the substituted linear KKT system for computational efficiency.
		This introduces the additional linear constraints
		\begin{align}
		{\lambda}_{l,\bE_{\cup l-1}} = \tilde{\lambda}_{l,\bE_{\cup l-1}}\nonumber
		\end{align}
	\end{itemize} 
	The original program with introduced split variables and constraints now writes as
	\begin{align}
	\mini_{{x},{v}_{\bE_p}}\qquad&  \frac{1}{2}\Vert {v}_{\bE_p} \Vert^2_2 \\
	\text{s.t.} \qquad
	&  A_{\bE_l}{x} - b_{\bE_l} - {v}_{\bE_l} = \BIN 0 & -{w}_{\bI_l^T} \BOUT^T \qquad l=1,\dots,p\label{eq:lincstr}\\
	& A_{\bE_l}^Tv_{\bE_l} + A_{\bE_{\cup l-1}}^T\lambda_{l,\bE_{\cup l-1}} = 0 \qquad l=1,\dots,p-1 \label{eq:ATL2}\\
	&  {v}_{\bE_l}  + \hat{b}_{\bE_l}  = z_{\bE_l} \qquad l=1,\dots,p-1 \label{eq:z_lincstr}\\
	& {\lambda}_{l,\bE_{\cup l-1}} = \tilde{\lambda}_{l,\bE_{\cup l-1}} \qquad l=2,\dots,p-1\label{eq:tlam_lincstr}\\
	&z_{\bE_l},\tilde{\lambda}_{l,\bE_{\cup l-1}}\in C_{\epsilon},\qquad l = 1,\dots,p-1\nonumber\\
	& x=\tilde{x}\nonumber
	\end{align}
	The closed and convex set $ \mathcal{C}_{\epsilon}$ is defined as follows:
	\begin{align}
	&\mathcal{C}_{\epsilon} = \{z_{\bE_l},\tilde{\lambda}_{l,\bE_{\cup l-1}} \hspace{3pt}|\hspace{3pt} z_{\bE_l}^Tz_{\bE_l} - \hat{b}_{\bE_l}^T\hat{b}_{\bE_l} +  \tilde{\lambda}_{l,\bE_{\cup l-1}}^Tb_{\bE_{\cup {l-1}}}  \leq 0 \}\qquad l = 1,\dots,p-1\nonumber
	\end{align}
	The program is summarized as
	\begin{align}
	\mini_x \qquad& \frac{1}{2}q^THq + \sum_{l=1}^{p-1}I_{\epsilon}(z_{\bE_{l}},\tilde{\lambda}_{l,\bE_{\cup l-1}})
	\label{eq:admmdhlsp}\\
	\text{s.t.}\qquad& B_{\mu}q + C_{\mu}s = c_{\mu}\nonumber\\
	& B_{\eta}q + C_{\eta}s = c_{\eta}\nonumber\\
	& B_{\phi}q + C_{\phi}s = c_{\phi}\nonumber\\
	& B_{\nu}q + C_{\nu}s = c_{\nu}\nonumber\\
	& x = \tilde{x} \nonumber
	\end{align}
	The variable vectors are
	\begin{align}
	q &\coloneqq 
	\BIN x^T & v_{\bE_{\cup p}}^T & \dots & \lambda_{l,\bE_{\cup l-1}}^T &   \dots \BOUT^T\nonumber\\
	s&\coloneqq
	\BIN   z_{\bE_{\cup p-1}}^T &\dots & \tilde{\lambda}_{l,\bE_{\cup l-1}}^T & \dots \BOUT^T\nonumber
	\end{align}
	For $B$, $C$, and $c$, a distinction between the constraint sets 
	$\mu$, $\eta$, $\phi$, and $\nu$ is made. $\mu$ represents the constraints associated with the primal linear constraints~\eqref{eq:lincstr}, $\eta$ the dual linear constraints \eqref{eq:ATL2}, and $\phi$ and $\nu$ the representation of the quadratic dual constraints \eqref{eq:z_lincstr} and \eqref{eq:tlam_lincstr}, respectively. In each case, the same symbol is also used for the Lagrange multipliers associated with the corresponding constraint. The matrices are then defined as follows:
	\begin{align}
	H &= \diag(0,I_{m_{\bE_p}\times m_{\bE_p}},0)\nonumber
	\\
	B&\coloneqq \BIN B_{\mu}^T & B_{\eta}^T & B_{\phi}^T & B_{\nu}^T\BOUT^T\nonumber\\
	B_{\mu} &= \BIN A_{\bE_{\cup p}} & -I &  \dots & 0 & \dots \BOUT \nonumber\\
	B_{\eta} &= \BIN
	0 & A_{\bE_1}^T & \dots & 0 & 0 & \dots & 0 \\
	\vdots & \vdots & \ddots & \vdots & \vdots &  \ddots & \vdots  \\
	0 & 0 & \dots & A_{\bE_{p-1}}^T & 0 &  \dots & A_{\bE_{\cup p-2}}^T  
	\BOUT \nonumber\\
	B_{\phi} &= \BIN
	0 & I &0 &  \dots & 0 &\dots
	\BOUT\nonumber\\
	B_{\nu} &= \BIN
	\vdots & \vdots & \ddots & \vdots & \vdots & \vdots & \ddots  & \ddots & \vdots \\
	0 & 0 & \dots & 0 & 0 & \dots & I & \dots & 0\\
	\vdots & \vdots & \ddots & \vdots & \vdots  & \ddots & \vdots & \ddots & \vdots 
	\BOUT \nonumber\\
	C&\coloneqq\BIN C_{\mu}^T & C_{\eta}^T & C_{\phi}^T  & C_{\nu}^T \BOUT^T, \quad C_{\mu} = 0, \quad C_{\eta} = 0, \quad C_{\phi} =\BIN
	-I & \dots & 0 &  \dots
	\BOUT\nonumber\\
	C_{\nu} &=\BIN
	\vdots & \ddots & \vdots & \ddots\\
	0 & \dots & -I & \dots   \\
	\vdots & \ddots & \vdots & \ddots \\
	\BOUT \nonumber\\
	c &\coloneqq \BIN c_{\mu}^T & c_{\eta}^T & c_{\phi}^T & c_{\nu}^T  \BOUT^T, \quad
	c_{\mu} =  b_{\bE_{\cup p}}, \quad
	c_{\eta} = 0,\quad
	c_{\phi} =   -\hat{b}_{\bE_{\cup p-1}} ,\quad
	c_{\nu} = 0\nonumber
	\end{align}
	$I_{\epsilon}$ is an indicator function which projects the split variables onto their respective constraint sets:
	\begin{align}
	& I_{\epsilon}(z_{\bE_l},\tilde{\lambda}_{l,\bE_{\cup l-1}}) = 
	\begin{cases} 0 & \text{if}\quad   \hspace{3pt} z_{\bE_l}^Tz_{\bE_l} - \hat{b}_{\bE_l}^T\hat{b}_{\bE_l} + \tilde{\lambda}_{l,\bE_{\cup l-1}}^Tb_{\bE_{\cup l-1}} \leq 0  \nonumber	\\
	+\infty & \text{otherwise}
	\end{cases}
	\end{align}
	In the ADMM, an augmented Lagrangian is used, which writes as 
	\begin{align}
	\mathcal{L}_p^{\rho}(q,s,\tilde{x},u) &= \frac{1}{2}q^THq +  \sum_{l=1}^{p-1}I_{\epsilon}(z_{\bE_{l}},\tilde{\lambda}_{l,\bE_{\cup l-1}}) + \sigma \Vert x - \tilde{x} \Vert_2^2+ \sum_{\bullet,*\in\mu,\eta,\phi,\nu} \frac{\rho\rho_{\bullet}}{2}\Vert B_{\bullet}q + C_{\bullet}s - c_{\bullet} + \frac{1}{\rho\rho_{\bullet}}{*}\Vert_2^2 
	\end{align}
	The symbol $\bullet$ is a place-holder for the different constraint sets $\mu$, $\eta$, $\phi$, and $\nu$, while $*$ is a place-holder for the Lagrange multipliers $\mu$, $\eta$, $\phi$, and $\nu$ associated with the same constraint sets. 
	The parameter $\sigma>0$ is a regularization parameter and ensures that the resulting KKT system is positive-definite.
	The parameter $\rho$ is the variable step-size parameter, which is updated as described in Sec.~\ref{sec:updaterho}. Additionally, for each constraint set, a constant pre-factor is introduced in order to create a bias towards activity of equality constraints ($\mu_{\bE}$, $\eta$) by choosing higher values of the step-size parameter, and a bias towards inactivity of inequality constraints ($\phi$, $\nu$), which may be inactive. In this work, the choice is
	\begin{align}
	\rho_{\mu_{\bE}} = 100, \qquad
	\rho_{\eta} = 10, \qquad
	\rho_{\phi} = \rho_{\nu} = 1\nonumber
	\end{align}
	$\rho_{\phi}$ and $\rho_{\nu}$ both represent the same quadratic dual inequality constraint, and are therefore chosen to the same smaller value.
	
	ADMM alternately updates estimates for the primal variables $q$ (see Sec. \ref{sec:primupdate}), split variables $s$ (see Sec. \ref{sec:updatesplit}), and dual variables $\mu$, $\eta$, $\phi$, and $\nu$ (see Sec. \ref{sec:dualascent}).
	Convergence is measured by the norm of the primal and dual residuals $k_{prim}$ and $k_{dual}$~\eqref{eq:kprimdual}, which needs to be smaller than a small threshold $\chi \ll 1$. The residuals represent the first-order optimality conditions of the unaugmented Lagrangian  of \eqref{eq:admmdhlsp} with respect to the dual and primal variables, respectively~\cite{osqp}:
	\begin{align}
	k_{prim} &\coloneqq 
	\BIN
	A_{\bE_{\cup p}}x - b_{\bE_{\cup p}} - v_{\bE_{\cup p}} + w_{\bE_{\cup p}}\\
	A_{\bE_{1}}^Tv_{\bE_1} \\
	\vdots\\
	A_{\bE_{p-1}}^Tv_{\bE_{p-1}} + A_{\bE_{\cup p-2}}^T\lambda_{p-1,\cup p-2}\\
	v_{\bE_{\cup p-1}} + \hat{b}_{\bE_{\cup p-1}} - z_{\bE_{\cup p-1}}\\
	\vdots\\
	\lambda_{l,\bE_{\cup l-1}} - \tilde{\lambda}_{l,\bE_{\cup l-1}}\\
	\vdots
	\BOUT\qquad l = 2,\dots,p-1
	\nonumber\\
	k_{dual} &\coloneqq 
	\BIN
	A_{\bE_{\cup p}}^T\mu_{p,\bE_{\cup p}}\\
	\vdots\\
	-\mu_{\bE_{l}} + A_{\bE_l}\eta_{l} + \phi_{\bE_{l}}\\
	\vdots\\
	v_{\bE_p}-\mu_{p,\bE_p}\\
	\vdots\\
	A_{\bE_{\cup l-1}}\eta_{l} + \nu_{l,\bE_{\cup l-1}}\\
	\vdots
	\BOUT \qquad l = 2,\dots,p-1
	\nonumber\\
	k_{prim,dual} &\coloneqq \BIN k_{prim} \\ k_{dual}\BOUT\label{eq:kprimdual}
	\end{align}

	\subsection{Primal update}
	\label{sec:primupdate}
	The primal update for $q$ results from the condition \begin{align}
	\nabla_{q^{(i+1)}} \mathcal{L}_p^{\rho}(q^{(i+1)},s^{(i)},\tilde{x}^{(i)},u^{(i)}) =  \left(H + \sigma\BIN I_x \\ & 0 \BOUT \right)q + \sum_{\bullet,*\in\mu,\eta,\phi,\nu} \frac{\rho\rho_{\bullet}}{2}B_{\bullet}^T\left( B_{\bullet}q + C_{\bullet}s - c_{\bullet} + \frac{1}{\rho\rho_{\bullet}}{*} \right) 
	\end{align}
	In the following, the index $i$ is omitted for better readability. The above condition results in the linear system
	\begin{align}
	K_q^{\rho}q = -k_q^{\rho}
	\end{align} 
	with
	\begin{align}
	K_q^{\rho}&\coloneqq\resizebox{.85\hsize}{!}{$ \left[\begin{array}{@{}cccccccccccccccccccccc@{}}
		\frac{\rho_{\mu}}{1+\rho\rho_{\mu}}A_{\bE_p}^TA_{\bE_p} + \sum_{l}^{p-1}\rho_{\mu} A_{\bE_l}^TA_{\bE_l}  + \sigma I & -\rho_{\mu} A_{\bE_1}^T & -\rho_{\mu} A_{\bE_2}^T & \dots & -\rho_{\mu} A_{\bE_p}^T & 0 & \dots & 0 & 0 & 0\\
		-\rho_{\mu} A_{\bE_1} & \rho_{\phi}I + \rho_{\mu}I  + \rho_{\eta} A_{\bE_1}A_{\bE_1}^T & 0 & \dots & 0 & 0 & \dots & 0 & 0 & 0 \\
		-\rho_{\mu} A_{\bE_2} & 0 & \rho_{\phi}I + \rho_{\mu}I + \rho_{\eta} A_{\bE_2}A_{\bE_2}^T  & \dots & 0 & \rho_{\eta}A_{\bE_2}A_{\bE_1}^T& \dots & 0 & 0 & 0 \\
		\vdots & \vdots& \vdots & \ddots & \vdots & \vdots & \ddots & \vdots & \ddots & \vdots\\
		-\rho_{\mu} A_{\bE_{p-1}} & 0 & 0 & \dots & \rho_{\phi}I + \rho_{\mu}I + \rho_{\eta} A_{\bE_{p-1}}A_{\bE_{p-1}}^T  & 0 & \dots  & \rho_{\eta}A_{\bE_{p-1}}A_{\bE_1}^T & \dots & \rho_{\eta}A_{\bE_{p-1}}A_{\bE_p-2}^T\\
		0 & 0 & \rho_{\eta}A_{\bE_1}A_{\bE_2}^T & \dots & 0 & \rho_{\eta}A_{\bE_1}A_{\bE_1}^T + \rho_{\nu}I& \dots & 0 & \dots & 0 \\
		\vdots & \vdots& \vdots & \ddots & \vdots & \vdots & \ddots & \vdots & \ddots & \vdots\\
		0 & 0 & 0 & \dots & \rho_{\eta}A_{\bE_1}A_{\bE_{p-1}}^T & 0 & \dots  & \rho_{\eta}A_{\bE_1}A_{\bE_1}^T + \rho_{\nu}I& \dots & \rho_{\eta}A_{\bE_1}A_{\bE_{p-2}}^T\\
		\vdots & \vdots& \vdots & \ddots & \vdots & \vdots & \ddots & \vdots & \ddots & \vdots\\
		0 & 0 & 0 & \dots & \rho_{\eta}A_{\bE_{p-2}}A_{\bE_{p-1}}^T & 0 & \dots &  \rho_{\eta}A_{\bE_{p-2}}A_{\bE_1}^T & \dots & \rho_{\eta}A_{\bE_{p-2}}A_{\bE_{p-2}}^T + \rho_{\nu}I
		\end{array}\right]$}
	\label{eq:Kqrho}\\
	k_q^{\rho}&\coloneqq-\BIN 
	\sum_{l=1}^{p-1}\rho_{\mu} A_{\bE_l}^T(b_{\bE_l}  - \frac{1}{\rho_{\mu}}\mu_{\bE_l})
	+ \frac{\rho_{\mu}}{1+\rho\rho_{\mu}} A_{\bE_p}^T( b_{\bE_p}  - \frac{1}{\rho_{\mu}}\mu_{\bE_p}) + \sigma \tilde x\\ 
	\rho_{\mu}(-b_{\bE_1} + \frac{1}{\rho_{\mu}}\mu_{\bE_1}) - A_{\bE_1}\eta_{1} + \rho_{\phi}(z_{\bE_1}-\hat{b}_{\bE_1} - \frac{1}{\rho_{\phi}}{\phi}_{1})\\
	\rho_{\mu}(-b_{\bE_2} + \frac{1}{\rho_{\mu}}\mu_{\bE_2}) - A_{\bE_2}\eta_{2} + \rho_{\phi}(z_{\bE_2}-\hat{b}_{\bE_2} - \frac{1}{\rho_{\phi}}{\phi}_{2})\\
	\vdots\\
	\rho_{\mu}( -b_{\bE_{p-1}} + \frac{1}{\rho_{\mu}}\mu_{\bE_{p-1}}) - A_{\bE_{p-1}}\eta_{{p-1}} + \rho_{\phi}(z_{\bE_{p-1}}-\hat{b}_{\bE_{p-1}} - \frac{1}{\rho_{\phi}}{\phi}_{{p-1}})\\
	-A_{\bE_1}\eta_{2} +\rho_{\nu}( \tilde{\lambda}_{2,\bE_1} - \frac{1}{\rho_{\nu}}\nu_{2,\bE_1})\\
	-A_{\bE_1}\eta_{3} +\rho_{\nu}( \tilde{\lambda}_{3,\bE_1} - \frac{1}{\rho_{\nu}}\nu_{3,\bE_1})\\
	-A_{\bE_2}\eta_{3} +\rho_{\nu}( \tilde{\lambda}_{3,\bE_2} - \frac{1}{\rho_{\nu}}\nu_{3,\bE_2})\\
	\vdots \\
	-A_{\bE_1}\eta_{p-1} + \rho_{\nu}(\tilde{\lambda}_{p-1,\bE_1} - \frac{1}{\rho_{\nu}}\nu_{p-1,\bE_1})\\
	\vdots \\
	-A_{\bE_{p-2}}\eta_{p-1} + \rho_{\nu}(\tilde{\lambda}_{p-1,\bE_{p-2}} - \frac{1}{\rho_{\nu}}\nu_{p-1,\bE_{p-2}})
	\BOUT
	\end{align}
	The above uses the substitution
	\begin{align}
	v_{\bE_p} =& \frac{\rho\rho_{\mu}}{I+\rho\rho_{\mu} I}(A_{\bE_p}x-b_{\bE_p} + \frac{1}{\rho_{\mu}}\mu_{\bE_p})
	\label{eq:subvp}
	\end{align}
	The matrix $K_q^{\rho}$ is positive definite due to the introduction of the split variables $\tilde{x}$ and $\tilde{\lambda}_{l,\bC_{\cup l-1}}$. This renders the block-diagonals invertible for any rank of $A_{\cup p}$ for $\sigma > 0$ and $\rho_{\nu}>0$.
	In the following, it is shown how this can be exploited to reduce this KKT system to only depend on the primal variables $x$, while substitutions in dependence of $x$ are found for all other primal variables in $q$ (including the primal-dual variables $\Lambda_{p-1}$). This facilitates the factorization of the matrix
	
	The following abbreviations are introduced for later use:
	\begin{align}
	h_l &=  b_{\bE_l}  - \frac{1}{\rho_{\mu}}\mu_{\bE_l},\qquad l = 1,\dots,p\nonumber\\
	h_{l,primal} &= 
	\rho_{\mu}\left(-b_{\bE_l} + \frac{1}{\rho_{\mu}}\lambda_{\bE_l,1}\right) - A_{\bE_l}\eta_{l} + \rho_{\phi}\left(z_{\bE_l}-\hat{b}_{\bE_l} - \frac{1}{\rho_{\phi}}{\phi}_{l}\right),\qquad l = 1,\dots,p-1\nonumber\\
	h_{l,dual} &= 
	\BIN
	-A_{\bE_1}\eta_{l} + \rho_{\nu}\left(\tilde{\lambda}_{l,\bE_1} - \frac{1}{\rho_{\nu}}\nu_{l,\bE_1}\right)\\
	\vdots \\
	-A_{\bE_{l-1}}\eta_{l} + \rho_{\nu}\left(\tilde{\lambda}_{l,\bE_{l-1}} - \frac{1}{\rho_{\nu}}\nu_{l,\bE_{l-1}}\right)
	\BOUT,\qquad l = 2,\dots,p-1\nonumber
	\end{align}
	
	\subsubsection{Substitution of primal-dual variables}
	\label{sec:subpd}
	Substitutions for the primal-dual variables $\lambda_{l,\bE_{\cup l-1}}$ with $l=1,\dots,p-1$ can be found by analyzing the rows and columns of $K_q$ corresponding to the primal-dual. Here, the blocks on the corresponding diagonals $M$ write as
	\begin{align}
	M_l =
	\BIN M_{l-1} & T_{l}^T \\ T_l & U_l \BOUT \coloneqq
	\BIN  \rho_{\eta}A_{\bE_1}A_{\bE_1}^T + \rho_{\epsilon}I& \rho_{\eta}A_{\bE_1}A_{\bE_2}^T& \hdots & \rho_{\eta}A_{\bE_1}A_{\bE_l}^T\\
	\rho_{\eta}A_{\bE_2}A_{\bE_1}^T & \rho_{\eta}A_{\bE_2}A_{\bE_2}^T + \rho_{\epsilon}I& \hdots &  \rho_{\eta}A_{\bE_2}A_{\bE_l}^T\\
	\vdots & \vdots & \ddots & \vdots\\
	\rho_{\eta}A_{\bE_l}A_{\bE_1}^T & \rho_{\eta}A_{\bE_l}A_{\bE_2}^T & \hdots &  \rho_{\eta}A_{\bE_l}A_{\bE_l}^T + \rho_{\epsilon}I
	\BOUT \nonumber
	\end{align}
	with 
	\begin{align}
	M_{l-1}&\coloneqq	\BIN  \rho_{\eta}A_{\bE_1}A_{\bE_1}^T + \rho_{\epsilon}I& \rho_{\eta}A_{\bE_1}A_{\bE_2}^T& \hdots & \rho_{\eta}A_{\bE_1}A_{\bE_l}^T\\
	\rho_{\eta}A_{\bE_2}A_{\bE_1}^T & \rho_{\eta}A_{\bE_2}A_{\bE_2}^T + \rho_{\epsilon}I& \hdots &  \rho_{\eta}A_{\bE_2}A_{\bE_{l-1}}^T\\
	\vdots & \vdots & \ddots & \vdots\\
	\rho_{\eta}A_{\bE_{l-1}}A_{\bE_1}^T & \rho_{\eta}A_{\bE_{l-1}}A_{\bE_2}^T & \hdots &  \rho_{\eta}A_{\bE_{l-1}}A_{\bE_{l-1}}^T + \rho_{\epsilon}I
	\BOUT \nonumber\\
	T_l &= \BIN \rho_{\eta}A_{\bE_l}A_{\bE_1}^T & \rho_{\eta}A_{\bE_l}A_{\bE_2}^T & \hdots & \rho_{\eta}A_{\bE_l}A_{\bE_{l-1}}^T  \BOUT\nonumber\\
	U_l &= \rho_{\eta}A_{\bE_l}A_{\bE_l}^T + \rho_{\epsilon}I\nonumber
	\end{align}
	Using the relationship for block-inverses, the inverse of $M_l$ writes as
	\begin{align}
	M_l^{-1} = 
	\BIN
	M_{l-1}^{-1} + M_{l-1}^{-1}T_l^T(U_l-T_lM_{l-1}^{-1}T_l^T)^{-1}T_lM_{l-1}^{-1} & -M_{l-1}^{-1}T_l^T(U_l-T_lM_{l-1}^{-1}T_l^T)^{-1} \\
	-(U_l-T_lM_{l-1}^{-1}T_l^T)^{-1}T_lM_{l-1}^{-1} & (U_l-T_lM_{l-1}^{-1}T_l^T)^{-1}
	\BOUT \nonumber
	\end{align}
	It can be observed that the block diagonal inversion of a block $M_l$ can be done by reusing the previously calculated inversion of $M_{l-1}$. Reusing block inversions results in a overall number of operations of $\sum_{l=1}^{p-1} m_l^3$, instead of $\sum_{l=2}^{p} n_{l}^{dual,3}$ in case that all diagonal blocks are inverted from scratch. While this is still expensive, this computation only needs to be done once when solving a HLSP. As can be seen, the matrices $M$ are independent from $\rho$ and therefore do not change after step-size parameter updates.
	
	The primal-dual variables can be recovered with the following relationship
	\begin{align}
	\lambda_{l,\bE_{\cup l-1}}
	=
	M_l^{-1}\left(h_{l,dual} - 
	\rho_{\eta} A_{\bE_{\cup l-1}}A_{\bE_l}^T
	v_{\bE_l}\right)\qquad \quad
	l = 2,\dots,p-1
	\label{eq:comppd}
	\end{align}

	\subsubsection{Eliminating primal slacks of lower priority levels}
	\label{sec:subvl}
	The primal slacks $v_{\bE_{\cup p-1}}$ of lower priority levels can be computed from the primal $x$ by considering the rows and column of $K_q^{\rho}$ corresponding to the primal slacks $v_{\bE_{\cup p-1}}$. This leads to the following substitution in dependence of $x$:
	\begin{align}
	v_{\bE_l} = &\left(\rho I + \rho_{\epsilon} I + \rho_{\eta} \left(A_{\bE_l}A_{\bE_l}^T -
	A_{\bE_l}A_{\bE_{\cup l-1}}^T
	M_l^{-1}
	A_{\bE_{\cup l-1}}A_{\bE_l}^T\right)\right)^{-1}\left(\rho_{\mu} A_{\bE_l}x + h_{l,primal}  - \rho_{\eta}
	A_{\bE_l}A_{\bE_{\cup l-1}}^T 
	M_l^{-1}h_{l,dual}\right)\nonumber
	\end{align}

	\subsubsection{Reduced system}
	The overall reduced KKT system solely depends on the primal variable $x$, and writes as
	\begin{align}
	K_x^{\rho} x^{(i+1)} = -k_x^{\rho}
	\label{eq:Kx}
	\end{align}
	with
	\begin{align}
	K_x^{\rho} &= \rho_{\mu}A_{\bE_1}^T\left(I-(\rho_{\epsilon}I + \rho_{\mu}I +  \rho_{\eta}A_{\bE_1}A_{\bE_1}^T)^{-1}\rho_{\mu}\right)A_{\bE_1} \label{eq:Kxrho}\\
	&+ \rho_{\mu}A_{\bE_2}^T\left(I-(\rho_{\epsilon}I + \rho_{\mu}I + \rho_{\eta} A_{\bE_2}A_{\bE_2}^T - \rho_{\eta}^2 A_{\bE_2}A_{\bE_1}^T(A_{\bE_1}A_{\bE_1}^T + I)^{-1} A_{\bE_1}A_{\bE_2}^T )^{-1}\rho_{\mu}\right)A_{\bE_2} \nonumber\\
	& + \dots\nonumber\\
	& + \rho_{\mu}A_{\bE_{p-1}}^T\left(I - \left(\rho_{\epsilon}I + \rho_{\mu}I +  \rho_{\eta}A_{\bE_{p-1}}A_{\bE_{p-1}}^T - 
	\rho_{\eta}^2 A_{\bE_{p-1}}A_{\bE_{\cup{p-2}}}^T
	M_{p-2}^{-1}
	A_{\bE_{\cup{p-2}}}A_{\bE_{p-1}}^T\right)^{-1}\rho_{\mu}\right)A_{\bE_{p-1}} \nonumber\\
	&+ \frac{\rho_{\mu}}{1+\rho\rho_{\mu}}A_{\bE_{p}}^TA_{\bE_{p}}\nonumber
	\end{align}
	and
	\begin{align}
	k_x^{\rho} =& 
	\rho_{\mu}A_{\bE_1}^T(h_{1}  + (\rho_{\epsilon}I + \rho_{\mu}I + \rho_{\eta}A_{\bE_1}^TA_{\bE_1})^{-1}h_{1,primal})\label{eq:kxrho}\\
	+& \rho_{\mu}A_{\bE_2}^T(h_{2} + (\rho_{\epsilon}I + \rho_{\mu}I + \rho_{\eta} A_{\bE_2}A_{\bE_2}^T - \rho_{\eta}^2 A_{\bE_2}A_{\bE_1}^T(A_{\bE_1}A_{\bE_1}^T + I)^{-1} A_{\bE_1}A_{\bE_2}^T )^{-1}\nonumber\\
	&(h_{2,primal} - \rho_{\eta} A_{\bE_2}A_{\bE_1}^T(A_{\bE_1}A_{\bE_1}^T + I)^{-1}h_{2,dual})\nonumber\\
	+&\dots\nonumber\\
	+& \rho_{\mu}A_{\bE_{p-1}}^T\left(h_{p-1} + \left(\rho_{\epsilon}I + \rho_{\mu}I + \rho_{\eta} A_{\bE_{p-1}}A_{\bE_{p-1}}^T -
	\rho_{\eta}^2 A_{\bE_{p-1}}A_{\bE_{\cup p-2}}^T
	M_l^{-1}
	A_{\bE_{\cup p-2}}A_{\bE_{p-1}}^T\right)^{-1}\right.\nonumber\\
	&\left.\left(h_{p-1,primal}  - \rho_{\eta} A_{\bE_{p-1}}A_{\bE_{\cup p-2}}^T
	M_l^{-1}h_{l,dual}\right)\right)\nonumber\\
	+&\frac{\rho_{\mu}}{1+\rho\rho_{\mu}}A_{\bE_p}^Th_{p}\nonumber
	\end{align}
	$K_x^{\rho}$ is positive-definite for $\sigma > 0$. 	The linear system \eqref{eq:Kx} can be solved using a Cholesky or LDLT decomposition, where the factorization of $K_x^{\rho}$ only needs to be updated if $\rho$ changes.
	
	\subsection{Updating primal splitting variables}
	\label{sec:updatesplit}
	For the splitting variable update, we use the condition $\nabla_{s^{(i+1)},\tilde{x}^{(i+1)}} \mathcal{L}_{\rho}(q^{(i+1)},s^{(i+1)},\tilde{x}^{(i+1)},*^{(i)}) = 0$ (with $*\in \{\mu,\eta,\phi,\nu\}$). For one, this results in the update $\tilde{x}^{(i+1)} = x^{(i+1)}$. Furthermore, considering the fact that $C_{\bullet}^TC_{\bullet}=I$, this condition becomes $\rho_{\bullet} C_{\bullet}^TC_{\bullet}s = \rho_{\bullet} C_{\bullet}^T(-B_{\bullet}q + c_{\bullet} - \frac{1}{\rho\rho_{\bullet}}*)$ (with $*,\bullet\in \{\mu,\eta,\phi,\nu\}$), which fully writes as
	\begin{align}
	z_{\bE_l} &= v_{\bE_l} + \hat{b}_{\bE_l} + \frac{1}{\rho\rho_{\phi}}\phi_{\bE_l,\cup l-1}, \qquad l = 2,\dots,p-1\label{eq:splitupdate}\\
	\tilde{\lambda}_{l,\bE_{\cup l-1}} &= \lambda_{l,\bE_{\cup l-1}} + \frac{1}{\rho\rho_{\nu}}\nu_{l,\bE_{\cup l-1}}, \qquad l = 2,\dots,p-1\nonumber
	\end{align}
	$z_{\bE_l}$ with $l=1,\dots,p-1$ and $\tilde{\lambda}_{l,\bE_{\cup l-1}}$ are subject to the convex quadratic projection $I_{\epsilon}(z_{\bE_l},\tilde{\lambda}_{l,\bE_{\cup l-1}})$.
	This can be achieved by solving the following convex QCLSP for $z_{\bE_l}$ ($l=1,\dots,p-1$) and $\tilde{\lambda}_{l,\bE_{\cup l-1}}$ ($2=1,\dots,p-1$)
	\begin{align}
	\mini_{z_{\bE_l},\lambda_{l,\bE_{\cup l-1}}} \qquad& \frac{1}{2}\left\Vert 
	\BIN z_{\bE_l} \\ \tilde{\lambda}_{l,\bE_{\cup l-1}} \BOUT
	-
	\BIN
	a_{1,l}\\
	a_{2,\cup l-1}
	\BOUT
	\right\Vert_2^2
	\label{eq:qcproj}\tag{QCLSP$-{\epsilon}$}\\
	s.t.\qquad & z_{\bE_l}^Tz_{\bE_l} - \hat{b}_{\bE_l}^T\hat{b}_{\bE_l} + \tilde{\lambda}_{l,\bE_{\cup l-1}}^Tb_{\bE_{\cup l-1}} \leq 0\nonumber
	\end{align}
	Following abbreviations are used:
	\begin{align}
	a_{1,l} \coloneqq& v_{\bE_l} + \hat{b}_{\bE_l} + \frac{1}{\rho_{\phi}}\phi_{\bE_l}\nonumber\\
	a_{2,\cup l-1} \coloneqq& \lambda_{l,\bE_{\cup l-1}} + \frac{1}{\rho_{\nu}}\nu_{l,\bE_{\cup l-1}}\nonumber
	\end{align}
	The Lagrangian of \ref{eq:qcproj} then writes as
	\begin{align}
	\mathcal{L}_l^{\epsilon} =& \frac{1}{2}\left\Vert 
	\BIN z_{\bE_l} \\ \tilde{\lambda}_{l,\bE_{\cup l-1}} \BOUT
	-
	\BIN
	a_{1,l}\\a_{2,\cup l-1}
	\BOUT
	\right\Vert_2^2 
	+
	\theta_{p,l} (z_{\bE_l}^Tz_{\bE_l} - \hat{b}_{\bE_l}^T\hat{b}_{\bE_l} + \tilde{\lambda}_{l,\bE_{\cup l-1}}^Tb_{\bE_{\cup l-1}})	
	\end{align}
	The Lagrange multiplier $\theta_{p,l}$ is subject to the condition $\theta_{p,l} \geq 0$ in order to ensure the negativity of the quadratic inequality constraints.
	From the first-order derivative of $\mathcal{L}_l^{\epsilon}$ with respect to $z_{\bE_l}$, the following expression is obtained:
	\begin{align}
	z_{\bE_l} =& \frac{a_{1,l}}{1+2\theta_{p,l}}\nonumber
	\end{align}
	The first order derivative with respect to $\tilde{\lambda}_{{\cup l-1},\bE_{\cup l-1}}$ yields
	\begin{align}
	\tilde{\lambda}_{l,\bE_{\cup l-1}} =& a_{2,\cup l-1} - \theta_{p,l} b_{\bE_{\cup l-1}}\nonumber
	\end{align}
	Finally, the first-order derivative with respect to $\theta_{p,l}$ using the expressions for $z_{\bE_l}$ and $\lambda_{{\cup l-1},\bE_{\cup l-1}}$ gives
	\begin{align}
	e_{1,l} + \theta_{p,l} e_{2,l} + \theta_{p,l}^2 e_{3,l} + \theta_{p,l}^3 e_{4,l} = 0 \label{eq:qcqppoly}
	\end{align}
	with
	\begin{align}
	e_{1,l} = d_{3,l}- d_{2,l}, \qquad 
	e_{2,l} = -d_{1,l} - 4d_{2,l}, \qquad
	e_{3,l} = -4d_{1,l} - 4d_{2,l}, \qquad
	e_{4,l} = -4d_{1,l}\nonumber
	\end{align}
	and
	\begin{align}
	d_{1,l} = b_{\bE_{\cup l-1}}^Tb_{\bE_{\cup l-1}},\qquad
	d_{2,l} = \hat{b}_{\bE_l}^T\hat{b}_{\bE_l} - a_{2,\cup l-1}^Tb_{\bE_{\cup l-1}},\qquad
	d_{3,l} = a_{1,l}^Ta_{1,l}\nonumber
	\end{align}
	If $d_{1,l} \neq 0$, the above is a third-order polynomial \eqref{eq:qcqppoly}  and can be solved analytically, for example by Cardano's formula. The solution consists of at least one real solution for $\theta_{p,l}$, and at most three different real solutions. A positive $\theta_{p,l}\geq 0$ indicates zero duality gap $\frac{1}{2}\Vert v_{\bE_l}\Vert^2 = g_l({v}_{\mathbb{E}_{\cup l}},\lambda_{l,\bE_{\cup l-1}})$. The condition $\theta_{p,l} \geq 0$ is enforced by $\theta_{p,l} \leftarrow \max(0,\theta_{p,l})$.

	\subsection{Partial preconditioning for computational efficiency}
	\label{sec:precond}
	The number of iterations $i$ in first-order methods depends on the conditioning of the problem at hand. A popular since computationally reasonable approach for improving the conditioning of optimization problems  is Ruiz matrix equilibration~\cite{ruizscaling}. Here, matrix rows and columns are iteratively scaled until they are normalized. The authors in~\cite{osqp} suggest to scale the  KKT system of \eqref{eq:admmdhlsp}, which is scaled as follows:
	\begin{align}
	\BIN \overline{H} & \overline{B}^T \\ \overline{B} & 0\BOUT =	\BIN L & 0 \\ 0 & V \BOUT \BIN P & B^T\\ B & 0 \BOUT \BIN L & 0 \\ 0 & V \BOUT
	\label{eq:equPA}
	\end{align}
	with 
	\begin{align}
	& \overline{H} = LHL\nonumber\\
	& \overline{B} = VBL\nonumber
	\end{align}
	Primal variables $q$, dual variables $\bullet \in \{\mu,\eta,\phi,\nu\}$, vectors $c$, and the convex set $\mathcal{C}$ are scaled as follows
	\begin{align}
	&\overline{q} = L^{-1}q\\
	&\overline{\bullet} = V^{-1}\bullet\\
	&\overline{c} = Vc\\
	&\overline{\mathcal{C}} = \{Vs | s\in\mathcal{C}\}
	\label{eq:equVar}
	\end{align}
	Overlined matrices and variables are the equilibrated equivalents of the original quantities.
	$L$ and $V$ are subdivided into components corresponding to their respective variable or constraint sets $\{x,v,\nu\}$ and $\{\mu,\eta,\phi,\nu\}$, respectively:
	\begin{align}
	L &\coloneqq \BIN L_x^T\in\mathbb{R}^{n_x\times n_x} & L_v^T\in\mathbb{R}^{m_{\cup p }\times m_{\cup p }} & L_{\nu}^T \in \mathbb{R}^{\sum_{l=2}^{p-1} n_l^{dual}\times\sum_{l=2}^{p-1} n_l^{dual}} \BOUT^T\nonumber\\
	V & \coloneqq \BIN V_{\mu}^T\in\mathbb{R}^{m_{\cup p}\times m_{\cup p}} & V_{\eta}^T\in\mathbb{R}^{(p-1)n_x \times (p-1)n_x}& V_{\phi}^T \in\mathbb{R}^{m_{\cup p-1} \times m_{\cup p-1}} & V_{\nu}^T \in \mathbb{R}^{\sum_{l=2}^{p-1} n_l^{dual}\times \sum_{l=2}^{p-1} n_l^{dual}} \BOUT^T\nonumber
	\end{align}
	Ruiz equilibraton could be applied to the full KKT system as is depicted in~\eqref{eq:equPA}. However, this would significantly increase the computational burden of the proposed solver due to two issues. 
	
	For one, the diagonal blocks corresponding to the primal-dual variables possibly consist of differently equilibrated matrices $A_{\bE_{\cup p}}$. This prevents the efficient since recursive computation of the inverses of the dual diagonal blocks $M$ (see Sec. \ref{sec:subpd}). In order to avoid this, only the constraint matrices $A_{\bE_{\cup p}}$ are scaled by Ruiz equilibration
	\begin{align}
	\overline{A}_{\bE_{\cup p}}  =	V_{\mu}  {A}_{\bE_{\cup p}} L_x 
	\end{align}
	The same equilibration is applied to the constraint matrices corresponding to the dual constraints by choosing
	\begin{align}
	L_v &= I \nonumber\\
	L_{\nu} &= \BIN V_{\mu}(1:m_{\cup 1},1:m_{\cup 1})^T & \dots & V_{\mu}(1:m_{\cup p-2},1:m_{\cup p-2})^T \BOUT^T \nonumber\\
	V_{\eta} &= \BIN L_x^T & \dots & L_x \BOUT^T \nonumber\\
	V_{\phi} &= L_{v}^{-1} \nonumber\\
	V_{\nu,l} &= L_{\nu}\nonumber
	\end{align}
	$V_{\mu}(a:b,c:d)$ extracts a sub-matrix from row $a$ to $b$ and column $c$ to $d$ from the original matrix $V_{\mu}$.
	The choices for $L_{\nu}$ and $V_{\eta}$ ensure that the occurrence of the matrix $\overline{A}_{\bE_l}$ on the diagonal of level $l=1,\dots p-2$ in the KKT Hessian has the same scaling. Otherwise, each matrix $M_l$ with $l=2,\dots,p-1$ in Sec. \ref{sec:subpd} would need to be computed from scratch, instead of the proposed efficient recursive computation. The choice for $L_{v} = I$ (instead of $L_{v}=V_{\mu}$, which would yield the same scaling for the off-diagonal occurrences of $A_{\bC_{\cup p}}$) avoids the quadratic cross-term $V_{\mu}V_{\mu}$, which would amplify possibly bad conditioning in $A_{\bE_{\cup p}}$. 
	
	Secondly, scaling the entire KKT Hessian by Ruiz equilibration negatively influences the computational complexity of computing the quadratic projection \ref{eq:qcproj}, as is demonstrated in the following.
	\begin{theorem}
		If $V_{\phi}\neq I$ and $V_{\nu}\neq I$, a polynomial of order $2m_{l}+1$  needs to be solved for the quadratic projection of each level $l=1,\dots,p-1$.
		\label{th:precond}
	\end{theorem}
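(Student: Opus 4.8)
The plan is to repeat the stationarity analysis of \Secref{sec:updatesplit} for the projection \ref{eq:qcproj}, but now with the equilibration matrices $V_{\phi}$ and $V_{\nu}$ active, and then to count the degree in the multiplier $\theta_{p,l}$ of the polynomial that replaces \eqref{eq:qcqppoly}. First I would write the projection in equilibrated coordinates. By the scaling rule $\overline{\mathcal{C}} = \{Vs \mid s \in \mathcal{C}\}$ in \eqref{eq:equVar}, the split variables obey $\overline{z}_{\bE_l} = V_{\phi} z_{\bE_l}$ and $\overline{\tilde{\lambda}}_{l,\bE_{\cup l-1}} = V_{\nu}\tilde{\lambda}_{l,\bE_{\cup l-1}}$, so the constraint defining $\mathcal{C}_{\epsilon}$ becomes, after substituting $z_{\bE_l} = V_{\phi}^{-1}\overline{z}_{\bE_l}$ and $\tilde{\lambda}_{l,\bE_{\cup l-1}} = V_{\nu}^{-1}\overline{\tilde{\lambda}}_{l,\bE_{\cup l-1}}$, the weighted form $\overline{z}_{\bE_l}^T V_{\phi}^{-2}\overline{z}_{\bE_l} - \hat{b}_{\bE_l}^T\hat{b}_{\bE_l} + \overline{\tilde{\lambda}}_{l,\bE_{\cup l-1}}^T V_{\nu}^{-1}b_{\bE_{\cup l-1}} \leq 0$, while the projection objective stays an identity-weighted Euclidean distance in the equilibrated variables. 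This asymmetry --- identity in the objective but a non-scalar diagonal $V_{\phi}^{-2}$ in the quadratic term of the constraint --- is exactly what breaks the single-denominator structure that produced the cubic.

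Next I would form the Lagrangian and take first-order conditions with respect to the equilibrated variables. The condition for $\overline{z}_{\bE_l}$ now reads $(I + 2\theta_{p,l}V_{\phi}^{-2})\overline{z}_{\bE_l} = \bar{a}_{1,l}$, i.e. component-wise $(\overline{z}_{\bE_l})_j = (\bar{a}_{1,l})_j / (1 + 2\theta_{p,l}(V_{\phi}^{-2})_{jj})$ for $j = 1,\dots,m_l$, where $\bar{a}_{1,l}$ is the equilibrated right-hand side; crucially the $m_l$ denominators are now distinct because $V_{\phi}^{-2}$ is not a multiple of the identity. The condition for $\overline{\tilde{\lambda}}_{l,\bE_{\cup l-1}}$ remains affine in $\theta_{p,l}$, namely $\overline{\tilde{\lambda}}_{l,\bE_{\cup l-1}} = \bar{a}_{2,\cup l-1} - \theta_{p,l}V_{\nu}^{-1}b_{\bE_{\cup l-1}}$. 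Substituting both into the active constraint $g(\theta_{p,l}) = 0$ yields a scalar equation whose leading group of terms is $\sum_{j=1}^{m_l}(V_{\phi}^{-2})_{jj}(\bar{a}_{1,l})_j^2 / (1 + 2\theta_{p,l}(V_{\phi}^{-2})_{jj})^2$, together with the constant $-\hat{b}_{\bE_l}^T\hat{b}_{\bE_l} + \bar{a}_{2,\cup l-1}^T V_{\nu}^{-1}b_{\bE_{\cup l-1}}$ and the linear term $-\theta_{p,l}\,b_{\bE_{\cup l-1}}^T V_{\nu}^{-2}b_{\bE_{\cup l-1}}$ coming from the $\overline{\tilde{\lambda}}$ substitution.

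Finally I would clear denominators by multiplying through by the common denominator $\prod_{j=1}^{m_l}(1 + 2\theta_{p,l}(V_{\phi}^{-2})_{jj})^2$, which has degree $2m_l$, and count degrees. Each summand of the $\overline{z}$ term, after multiplication, retains only the $m_l - 1$ factors with index $k \neq j$ and thus contributes degree $2m_l - 2$; the constant term reaches degree $2m_l$; and the linear term $-\theta_{p,l}\,b_{\bE_{\cup l-1}}^T V_{\nu}^{-2}b_{\bE_{\cup l-1}}$ reaches degree $2m_l + 1$. Hence the root equation is a polynomial of order $2m_l + 1$, as claimed. As a sanity check, setting $V_{\phi} = I$ makes all denominators coincide at $(1 + 2\theta_{p,l})^2$, so the $\overline{z}$ term collapses to a single fraction, clearing requires only the degree-two factor $(1 + 2\theta_{p,l})^2$, and one recovers the cubic \eqref{eq:qcqppoly} independently of $m_l$.

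The hard part will be the bookkeeping of how $V_{\phi}$ and $V_{\nu}$ propagate into the split-variable subproblem, in particular being precise that the equilibrated objective remains identity-weighted while the scaled set $\overline{\mathcal{C}}_{\epsilon}$ is what carries the distinct diagonal entries of $V_{\phi}^{-2}$, since this is the precise mechanism that forces $m_l$ separate denominators rather than one. A secondary point is confirming that the order is exactly $2m_l + 1$ and does not drop: the coefficient of $\theta_{p,l}^{2m_l+1}$ is a strictly positive multiple of $b_{\bE_{\cup l-1}}^T V_{\nu}^{-2}b_{\bE_{\cup l-1}}$, which is nonzero generically, i.e. whenever $b_{\bE_{\cup l-1}} \neq 0$.
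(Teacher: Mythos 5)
Your proposal is correct and follows essentially the same route as the paper's proof: derive the component-wise stationarity conditions of the scaled projection, observe that the non-scalar diagonal $V_{\phi,l}$ produces $m_l$ distinct denominators $(V_{\phi,l}(j)^2+2\theta_{p,l})^2$, clear them with the degree-$2m_l$ common denominator, and note that the linear $\theta_{p,l}$ term from the $\tilde{\lambda}$ substitution raises the total degree to $2m_l+1$. Your change of variables to equilibrated coordinates is only a cosmetic reparametrization of the paper's formulation (which keeps $V_{\phi,l}$, $V_{\nu,l}$ in the objective and the constraint unscaled), and your added leading-coefficient and $V_{\phi}=I$ sanity checks are consistent with, but not required by, the paper's argument.
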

	\begin{proof}
		According to~\eqref{eq:equVar}, the scaled quadratic projection \ref{eq:qcproj} becomes 
		\begin{align}
		\mini_{z_l,\lambda_{l,\bE_{\cup l-1}}} \qquad& \frac{1}{2}\left\Vert 
		\BIN V_{\phi,l}z_{\bE_l} \\ V_{\nu,l}\tilde{\lambda}_{l,\bE_{\cup l-1}} \BOUT
		-
		\BIN
		\overline{a}_{1,l}\\
		\overline{a}_{2,\cup l-1}\\
		\BOUT
		\right\Vert_2^2\label{eq:qcqpequ}\\
		s.t.\qquad & z_{\bE_l}^Tz_{\bE_l} - \hat{b}_{\bE_l}^T\hat{b}_{\bE_l} + \tilde{\lambda}_{l,\bE_{\cup l-1}}^Tb_{\bE_{\cup l-1}} \leq 0
		\nonumber
		\end{align}
		with the abbrevations
		\begin{align}
		\overline{a}_{1,l}&\coloneqq F_l\overline{v}_{\bE_l} + V_{\phi,l}\hat{b}_{\bE_l} + \frac{1}{\rho_{\mu}}\overline{\lambda}_{p,\bE_l}\nonumber\\
		\overline{a}_{2,\cup l-1} &\coloneqq J_l\overline{\lambda}_{2,\bE_1} + \frac{1}{\rho_{\nu}}\overline{\nu}_{2,\bE_1} \nonumber
		\end{align}
		The additional index $l$ produces the components of $V_{\phi,l}$ and $V_{\nu,l}$ which correspond to the constraint sets $\phi$ and $\nu$ of level $l$. 
		The matrices $F_l$ and $J_l$ are scaling matrices $F_l = V_{\phi,l} L_{v,l}$ and $J_l = V_{\nu,l} L_{\nu,l}$, respectively, which are individual blocks contained in the scaled matrix $\overline{B}$. 
		Similarly to Sec. \ref{sec:updatesplit}, the split variables are computed as
		\begin{align}
		z_{\bE_l} =& (V_{\phi,l}^TV_{\phi,l}+2\theta_{p,l} I)^{-1}V_{\phi,l}\overline{a}_1\\
		\tilde{\lambda}_{l,\bE_{\cup l-1}} =& (V_{\nu,l}^TV_{\nu,l})^{-1}(V_{\nu,l}^T\overline{a}_2 - \theta_{p,l} b_{\bE_{\cup l-1}})
		\end{align}
		Inserting these expressions into the first-order optimality condition of the scaled QCLSP \eqref{eq:qcqpequ} with respect to $\theta_{p,l}$ gives
		\begin{align}
		\sum_{j=1}^{m_{l}} a_{1}(j)^2V_{\phi,l}(j)^2\frac{1}{(V_{\phi,l}(j)^2 + 2\theta_{p,l})^2} + \sum_{j=1}^{m_{\cup l-1}} \frac{b_{\bE_{\cup l-1}}(j)(V_{\nu,l}(j)\overline{a}_{2,\cup l-1}(j) - \theta_{p,l} b_{\bE_{\cup l-1}}(j))}{V_{\nu,l}(j)^2} = 0
		\label{eq:scaledlambdacomp}
		\end{align}
		It can be observed that a common denominator for the first term in \eqref{eq:scaledlambdacomp} is	$\Pi_{i=1}^{m_l} (V_{\phi,l}(i)^2 + 2\lambda)^2$.
		If \eqref{eq:scaledlambdacomp} is multiplied by this polynomial of order $2m_l$, it leads to a polynomial of overall order $2m_{l}+1$, as claimed.
	\end{proof}
	Instead, the scaling matrices corresponding to the split constraints can be set as
	\begin{align}
	V_{\phi,l} &= I \qquad  \text{and} \qquad
	V_{\nu,l} = I\nonumber
	\end{align}
	In this case, \eqref{eq:scaledlambdacomp} becomes a third-order polynomial as in Sec. \ref{sec:updatesplit} and can be solved efficiently.
	All occurrences of identity matrices in \eqref{eq:Kqrho} or \eqref{eq:Kxrho} related to the parameters $\rho_{\phi}$ and $\phi_{\nu}$ are now weighted diagonal matrices $F$ and $J$, whose conditioning represents the (possibly bad) conditioning of the original unscaled matrix $B$.
	Another possibility is to employ an interior-point method (see App.~\ref{app:ipmepsilon}), which is free of matrix factorizations and therefore presents a valuable compromise between numerical stability and computational expenditure.
	
	\subsection{Dual ascent for dual Lagrange multiplier update}
	\label{sec:dualascent}
	The Lagrange multipliers are updated by dual ascent of the dual problem of the QCLSP, which writes as 
	\begin{align}
	* =& * + \rho\rho_{\bullet}\nabla_{*}g({*}) = {*} + \rho\rho_{\bullet}(B_{\bullet}q + C_{\bullet}s - c_{\bullet})
	\end{align}
	The symbols $*,\bullet\in\{\mu,\eta,\phi,\nu\}$ are place-holders for the individual dual variables and their corresponding related constraint sets. The updates are as follows:
	\begin{align}
	\mu_{\bE_{\cup p}} \pluseq& \rho\rho_{\mu}(A_{\bE_{\cup p}}x - b_{\bE_{\cup p}} - v_{\bE_{\cup p}})\nonumber\\
	\eta_{l} \pluseq& \rho\rho_{\eta}\left( A_{\bE_l}^Tv_{\bE_l} + A_{\cup l-1}^T\lambda_{l,\bE_{\cup l-1}}\right) \qquad l = 1,\dots,p-1\nonumber\\
	{\phi}_{\bE_{\cup p-1}} \pluseq& \rho\rho_{\phi}(v_{\bE_{\cup p-1}} + \hat{b}_{\bE_{\cup p-1}} - z_{\bE_{\cup p-1}})\nonumber\\
	\nu_{l,\bE_{\cup l-1}} \pluseq& \rho\rho_{\nu}(\lambda_{l,\bE_{\cup l-1}} - \tilde{\lambda}_{l,\bE_{\cup l-1}})\qquad l=2,\dots,p-1\nonumber
	\end{align}

	\subsection{Updating $\rho$}
	\label{sec:updaterho}
	The step-size parameter $\rho$ is updated according to the method presented in~\cite{osqp}
	\begin{align}
	\rho \leftarrow \rho 
	\sqrt{
		\frac{
			\Vert k_{prim}\Vert_{\infty} / \max{\Vert Bq-c\Vert_{\infty},\Vert Cs \Vert_{\infty}}
		}{
			\Vert k_{dual}\Vert_{\infty} / \max{\Vert Hx\Vert_{\infty},\Vert B^T\lambda\Vert_{\infty}}
	}}\nonumber
	\end{align}
	It balances reduction in the scaled primal and dual residuals.
	If $\rho$ is updated, $K^{\rho}_x$ needs to be updated and re-factorized. In contrast, $M_l$ for the substitution of the primal-dual variables (see Sec. \ref{sec:subpd}) remains unchanged, as it is independent of the variable step-size $\rho$, but only dependent on the constant ones $\rho_{\mu}$, $\rho_{\eta}$, $\rho_{\phi}$, and $\rho_{\nu}$.

	\subsection{Over-relaxed step}
	\label{sec:overrelax}
	In over-relaxed ADMM, a relaxed version of ${Bq^{i+1}} \leftarrow \alpha Bq^{i+1} - (1-\alpha)(Cs^{i} - c)$ with $\alpha \in (0,2)$ is used in the split variable and dual variable updates (sections \ref{sec:updatesplit} and \ref{sec:dualascent}). This has been observed to enhance convergence speed~\cite{Ghadimi2015}.
	\begin{align}
	A_{\bE_{\cup p}}x^{i+1} - v_{\bE_{\cup p}}^{i+1}&\leftarrow \alpha (A_{\bE_{\cup p}}x^{i+1} - v_{\bE_{\cup p}}^{i+1}) + (1-\alpha) b_{\bE_{\cup p}}\nonumber\\
	A_{\bE_l}^Tv_{\bE_l}^{i+1} + A_{\bE_{\cup l-1}}^T\lambda_{l,\bE_{\cup l-1}}^{i+1} &\leftarrow \alpha(A_{\bE_l}^Tv_{\bE_l}^{i+1} + A_{\bE_{\cup l-1}}^T\lambda_{l,\bE_{\cup l-1}}^{i+1}) = 0 \qquad l=1,\dots,p-1 \nonumber\\
	{v}_{\bE_l}^{i+1}& \leftarrow \alpha{v}_{\bE_l}^{i+1} - (1-\alpha)(-z_{\bE_l}^{i} + \hat{b}_{\bE_l})  \qquad l=1,\dots,p-1 \nonumber\\
	\lambda_{l,\bE_{\cup l-1}}^{i+1}&\leftarrow \alpha\lambda_{l,\bE_{\cup l-1}}^{i+1} + (1-\alpha) \tilde{\lambda}_{l,\bE_{\cup l-1}}^{i} \qquad l=2,\dots,p-1\nonumber
	\end{align}
	The split variable update becomes
	\begin{align}
	z_{\bE_l}^{i+1} &= \alpha{v}_{\bE_l}^{i+1} - (1-\alpha)(-z_{\bE_l}^{i} + \hat{b}_{\bE_l}) + \hat{b}_{\bE_l} + \frac{1}{\rho\rho_{\phi}}\phi_{\bE_l,\cup l-1}^{i}, \qquad l = 2,\dots,p-1\nonumber\\
	\tilde{\lambda}_{l,\bE_{\cup l-1}}^{i+1} &= \alpha\lambda_{l,\bE_{\cup l-1}}^{i+1} + (1-\alpha) \tilde{\lambda}_{l,\bE_{\cup l-1}}^{i}  + \frac{1}{\rho\rho_{\nu}}\nu_{l,\bE_{\cup l-1}}^{i}, \qquad l = 2,\dots,p-1\nonumber
	\end{align}
	The dual variable update writes as
	\begin{align}
	\mu_{\bE_{l}} \pluseq& \rho\rho_{\mu}(\alpha(A_{\bE_{\cup p}}^{i+1}x - v_{\bE_{\cup p}}^{i+1}) - (1-\alpha) b_{\bE_{\cup p}})\nonumber\\
	\eta_{l} \pluseq& \rho\rho_{\eta}\alpha\left( A_{\bE_l}^Tv_{\bE_l}^{i+1} + A_{\cup l-1}^T\lambda_{l,\bE_{\cup l-1}}^{i+1}\right) \qquad l = 1,\dots,p-1\nonumber\\
	{\phi}_{\bE_{\cup p-1}}^{i+1} \pluseq& \rho\rho_{\phi}(\alpha{v}_{\bE_l}^{i+1} - (1-\alpha)(-z_{\bE_l}^{i} + \hat{b}_{\bE_l}) + \hat{b}_{\bE_{\cup p-1}} - z_{\bE_{\cup p-1}}^{i+1})\nonumber\\
	\nu_{l,\bE_{\cup l-1}}^{i+1} \pluseq& \rho\rho_{\nu}(\alpha\lambda_{l,\bE_{\cup l-1}}^{i+1} + (1-\alpha) \tilde{\lambda}_{l,\bE_{\cup l-1}}^{i}  - \tilde{\lambda}_{l,\bE_{\cup l-1}}^{i+1})\qquad l=2,\dots,p-1\nonumber
	\end{align}
	
	\section{Comparison of hierarchical programming solvers with linear constraints}
	\label{sec:hpcompare}
	\begin{table*}[htp!]
		\centering
		\resizebox{\columnwidth}{!}	
		{%
			\begin{tabular}{@{} cccccccccccc @{}}  
				\toprule
				& \textbf{Norm} & \textbf{Type} & \textbf{Ineq.} &  \textbf{$\mathcal{N}$-proj.} & $\dim(K)$ & $\Lambda_{p-1}$ \textbf{comp.} & \textbf{Dense / Block op.} & \textbf{Diff.} & \textbf{Warm-start} & \textbf{Application}\\
				\midrule
				\textbf{HQP} \cite{escande2014,dimitrov:2015} & 2 & P & ASM  & \checkmark & $ m_{\mathcal{A}_l}\times n_x^{r,l}$ & $1\times$ & \checkmark / limited~\cite{dimitrov:2015} & \xmark & \xmark & \begin{tabular}{@{}c@{}}well-posed problems /\\small AS changes \end{tabular} \\
				\midrule
				$\mathcal{N}$\textbf{IPM} \cite{pfeiffer2021}& 2 & P & IPM  & \checkmark & \begin{tabular}{@{}c@{}}$(m_{\bE_l}+m_{\mI_{\cup l-1}}) \times n_x^{r,l}$ \\ or $n_x^{r,l}\times n_x^{r,l}$\end{tabular} & $1\times$ & \xmark & \xmark & \xmark & \begin{tabular}{@{}c@{}} ill-posed problems /\\ large AS changes\end{tabular}\\
				\midrule
				$\mathcal{N}$\textbf{ADM} \cite{pfeiffer2024b} & 2 & P & ADMM  & \checkmark & $n_x^{r,l}\times n_x^{r,l}$ & $1\times$ & \xmark & \xmark & \checkmark & MPC / embedded systems\\
				\midrule
				$\mathcal{N}$\textbf{QP} \cite{pfeiffer2025} & 0,1 & P & IPM  & \checkmark & $n_x^{r,l}\times n_x^{r,l}$ & $1\times$ & \xmark & \xmark & \xmark & sparse problems\\
				\midrule
				\textbf{HLP}	\cite{Sathya2021} & 1 & D & \begin{tabular}{@{}c@{}} Simplex /\\ IPM\end{tabular} & \xmark & $(n_x + m_{\cup p} + \sum_{l=1}^{p-2}n_l^{dual})^2$ &$i\times$  & \xmark & \checkmark & \checkmark / \xmark& sparse problems\\
				\midrule
				\textbf{D-HADM (this work)} & 2 & D & \xmark  & \xmark & $n_x \times n_x$& $i\times$ & \checkmark & \checkmark & \checkmark & reasonable speed, less accurate\\
				\midrule
				\textbf{D-HIPM (this work)} & 2 & D & \xmark  & \xmark & $(n_x + \sum_{l=1}^{p-2}n_l^{dual})^2$ & $i\times$ & \xmark & \checkmark & \xmark & slow, accurate\\
				\bottomrule
		\end{tabular}}
		\caption{Comparison of different hierarchical programming solvers. Criteria include the norm of the linear constraint relaxation (`Norm'), underlying problem formulation (primal or dual HLSP, `Type'), ability of and method for handling of  inequalities (`Ineq. Cstr.'), whether they include variable and constraint elimination through nullspace projections for computational efficiency (`$\mathcal{N}$-proj.'), the dimension of the KKT system after substitutions (`$\dim(K)$'), frequency of primal-dual updates (`$\Lambda_{p-1}$ comp.'), whether the solver is implemented with dense linear algebra routines and supports memory efficient block operations (`Dense / Block op.'), whether the solver is differentiable (`Diff.'), whether the solver can be warm-started (`Warm-start'), and finally a remark on the preferred application for the solver (`Application').}
		\label{tab:hlspcomp}
	\end{table*}
	Table~\ref{tab:hlspcomp} compares characteristics and the computational scope of different solvers for hierarchical programming with linear constraints.
	These programs are structured similarly to \ref{eq:lexhlsp}, but can be formulated in any norm $\ell$ for the relaxation of the linear constraints. HLSP solvers are the most common ones ($\ell=2$), but some advances in sparse programming ($\ell=0,1$) for hierarchical decision making have been made \cite{Sathya2021,pfeiffer2025}.
	The solvers HQP, $\mathcal{N}$IPM, $\mathcal{N}$QP, and $\mathcal{N}$ADM solve the primal HLSP \eqref{eq:hlsp}. They are based on the ASM, IPM, or ADMM for handling inequality constraints. The ASM is preferred for well-posed problems with small changes of the active set between problem instances, for example as HLSP sub-problems in non-linear HLSP solvers \cite{pfeiffer2024}. On the other hand, the IPM is more robust to ill-posed problems and is agnostic to the overall number of changes in the active-set. This holds similarly for the ADMM, which has the additional advantage that matrix factorizations can be computed offline, for example for embedded applications with limited computation power \cite{dang2015}. For embedded solutions, the ability to warm-start the solver with a guess of the active-set presents a further advantage of the ADMM or ASM.  All primal solvers can be reduced to the primal variables $x$ by appropriate variable substitution. Additionally, \ref{eq:hlsp} solvers eliminate active constraints $\mathcal{A}_{\cup l-1}$ from the remaining problem $l$ to $p$ by projection into a basis of their nullspace $N_{l-1}\in\mathbb{R}^{n_x\times n_{x}^{r,l}}$, such that $A_{\mathcal{A}_{\cup l-1}}N_{l-1}=0$. The number of remaining variables on each level $l=1,\dots,p$ is $n_x^{r,l}$ with $n_x^{r,1} = n_x$. The rank of the matrix $A_{\mathcal{A}_{\cup l-1}}$ is $n_x - n_x^{r,l}$. This leads to progressively smaller problems due to reduction both in the number of variables and constraints.  Additionally, the primal-dual variables associated with the active constraints are eliminated from the problem by virtue of the nullspace projection, and only need to be computed once the primal solution of the HLSP is obtained, and only if required. This is in contrast to solvers based on dual HLSP, which need to compute updates of the primal-dual variables in every solver iteration $i$.
	
	HQP~\cite{escande2014} uses the least-squares form where the dimension of the KKT system to be solved in each solver iteration is $n_x^{r,l}\times m_{\mathcal{A}_l}$. $m_{\mathcal{A}_l}$ is the number of active constraints on level $l$, which includes the equality constraints $\bE_l$. Active constraints $\mathcal{A}_{\cup l-1}$ have been eliminated by nullspace projection. 
	Solving the least-squares form requires a rank-revealing QR decomposition, which can be reused to compute a basis of the nullspace of the active constraints.
	$\mathcal{N}$IPM~\cite{pfeiffer2021} is implemented both in the least-squares form and the normal form. The least-squares form in the IPM is of dimension $(m_{\bE_l}+m_{\mI_{\cup l-1}}) \times n_x^{r,l}$ and is therefore larger than the one from the ASM as it includes inactive inequality constraints both from the current level $l$ and the previous ones $\cup l-1$. The normal form is of dimension $n_{x}^{r,l}\times n_{x}^{r,l}$ and is advantageous if the number of constraint is high on lower priority levels. Factorization of the (semi-)positive definite Lagrangian Hessian can be done with a less computation-intense Cholesky or LDLT decomposition. However, a basis of nullspace needs to be computed from scratch and cannot reuse the previously obtained matrix factorization.
	
	The HQP solver designed in~\cite{dimitrov:2015} uses a specific choice of nullspace basis, which promotes efficient level-wise block operations during the nullspace computation and projection. On the other hand, dual HLSP solvers can treat the constraint matrices of all priority levels as one block $A_{\bE_{\cup p}}$. This is exploited in the proposed solver D-HADM, which is implemented using dense matrix operations. The dual solver HLP~\cite{Sathya2021} is based on sparse linear algebra routines as Gurobi \cite{gurobi} is used to solve the linear programs either via the Simplex method or the IPM. D-HIPM is implemented using sparse matrix formulations as well. Due to the simultaneous treatment of all priority levels, nullspace projections for variable or constraint elimination cannot be used in dual HLSP solvers. Furthermore, the primal-dual variables need to be computed in every solver iteration, which requires expensive matrix-vector multiplications of order $O((\sum_{l=1}^{p-2}n_l^{dual})^2)$ (proposed dual solver D-HADM). Primal-dual computations are significantly more expensive in the dual solvers HLP \cite{Sathya2021} (based on gurobi \cite{gurobi} using the simplex method or IPM for linear programs) and D-HIPM (this work, see Sec. \ref{app:ipm}) as no variable substitutions can be found and  large matrices need to be factorized instead. The proposed solvers D-HADM and D-HIPM are not able to handle inequality constraints due to the presence of non-convex complementary constraints.

	\section{Evaluation}
	\label{sec:eval}
	
	\begin{figure}[htp!]
		\includegraphics[width=0.75
		\columnwidth]{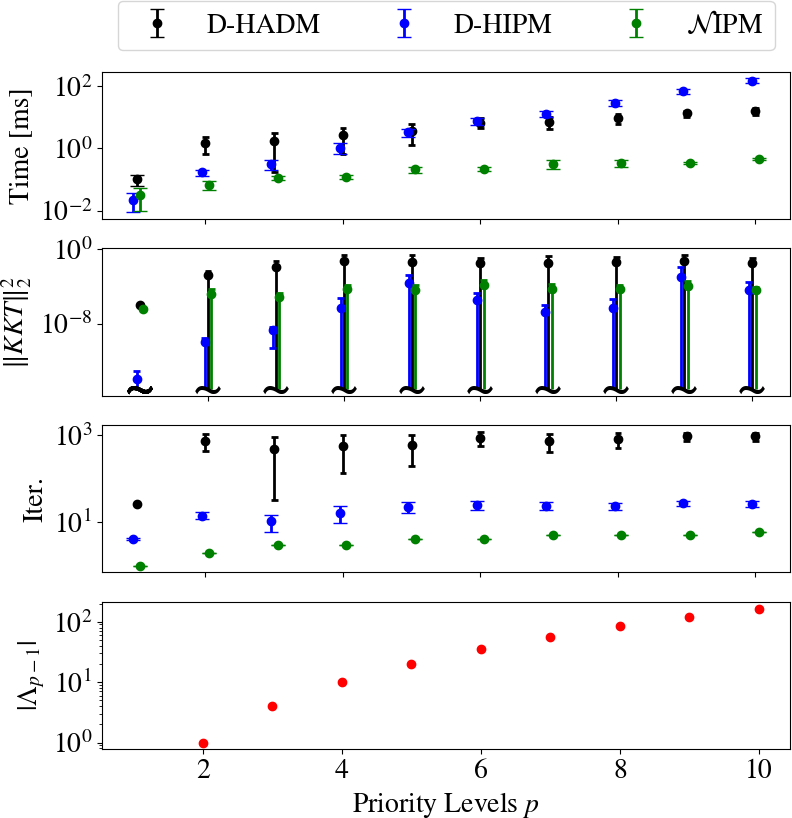}
		\centering
		\caption{Solver data of D-HADM, D-HIPM, and $\mathcal{N}$IPM~\cite{pfeiffer2021} for hierarchies of $p=1,\dots,10$ levels with equality constraints. Shown from top to bottom are the solver times, norm of the KKT residuals, number of solver iterations, and the number of primal-dual variables $\vert \Lambda_{p-1}\vert$.}
		\label{fig:solverdata}
	\end{figure}
	
	\begin{figure}[htp!]
		\includegraphics[width=0.475
		\columnwidth]{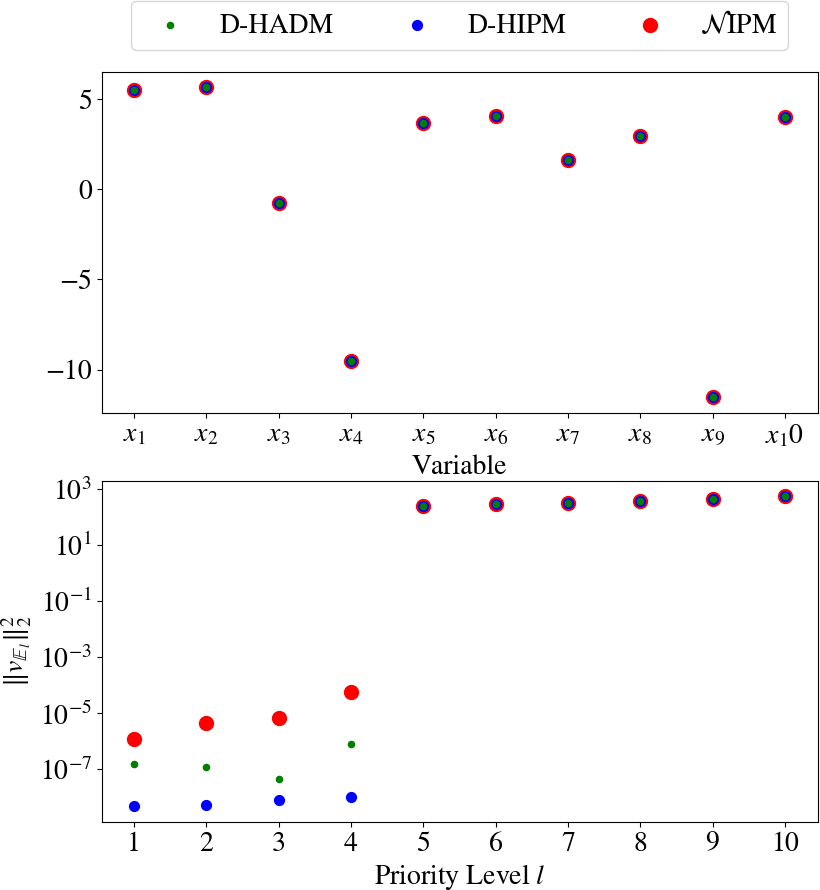}\hspace{5pt}
		\includegraphics[width=0.475
		\columnwidth]{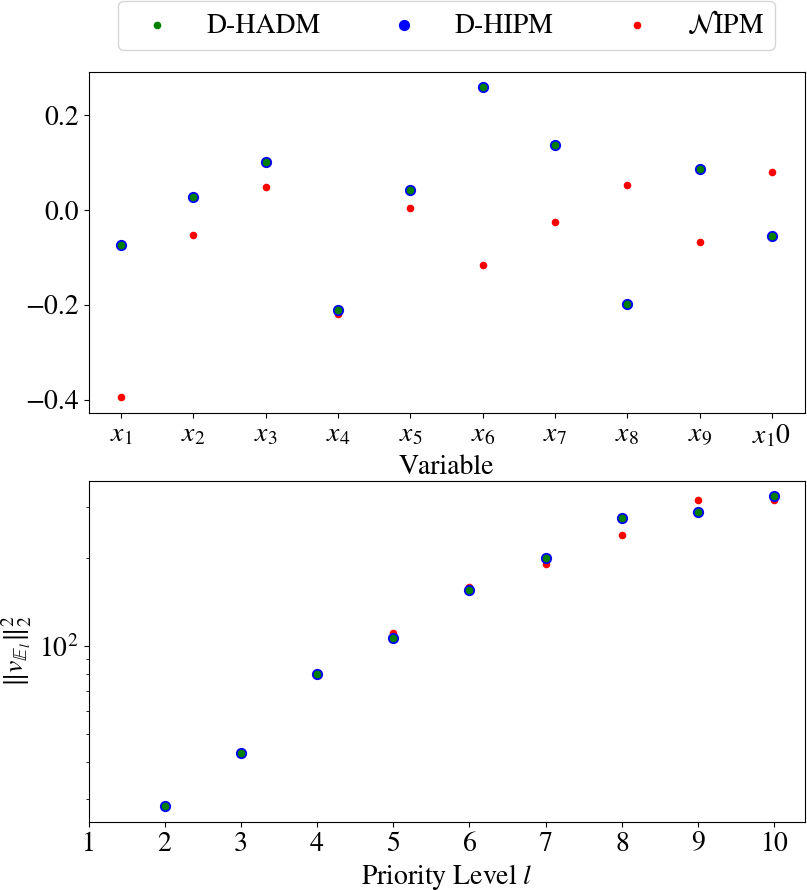}
		\centering
		\caption{Primal variables $x$ and norm of $v_{\bE_l}$ for an instance of a test with $p=n=10$, in the case of full rank (left) and rank-deficiency (right) of each constraint matrix $A_{\bE_l}$ (with $l=1,\dots,p$).}
		\label{fig:xv}
	\end{figure}
	
	Comparison of the behavior of the different HLSP solvers D-HADM (\ref{eq:d-hlsp} solver, based on ADMM), D-HIPM (\ref{eq:d-hlsp} solver, based on IPM), and $\mathcal{N}$IPM~\cite{pfeiffer2021} (\ref{eq:hlsp} solver, based on IPM) are given in Fig. \ref{fig:solverdata}. Each solver repeatedly (100 times) solves  hierarchies of $p=1,\dots,10$ levels with $n=p$ variables. Each level $l=1,\dots,p$ is composed of $l$ randomized equality constraints. On each level, 50\% of the constraints are linearly dependent with some slight perturbation (additive noise of magnitude $1\cdot10^{-12}$). The tests are run on an Intel i7-8550 CPU @ 1.80 GHz with 16 GB RAM.
	
	The proposed solver D-HADM shows a moderate increase in computation time with the number of priority levels and number of primal-dual variables. This is in contrast to D-HIPM, where the computational complexity grows cubically with the number of primal-dual variables $\vert \Lambda_{p-1}\vert$. For example, for $p=10$, D-HADM is about one magnitude faster than D-HIPM. For an instance of solving a problem with $p=9$, individual parts of the D-HADM aglorithm~\ref{alg:hadm} make up following parts of the overall computation time of 15~ms (with 700 solver iterations, no $\rho$ updates, $\Vert k_{prim,dual} \Vert_2^2 = 6.9\cdot 10^{-5}$):
	\begin{itemize}
		\itemsep0pt
		\item Computing and factorizing the substituted KKT matrix \eqref{eq:Kxrho} takes about 0.5\% of the time.
		\item Computing the right hand side \eqref{eq:kxrho} takes about 16\% of the computation time.
		\item Solving the substituted KKT system 	\eqref{eq:Kx} takes about 3\% of the computation time.
		\item Updating the primal-dual variables \eqref{eq:comppd} makes up about 11\% of the computation time.
		\item Solving \ref{eq:qcproj} takes about 23\%, while the actual root-finding of the third-order polynomial occupies about 5\%  of the time.
		\item Updating the dual variables of the dual problem (Sec. \ref{sec:dualascent}) requires about 21\% of the computation time.
	\end{itemize}
	The dual solvers are not competitive with the primal solver $\mathcal{N}$IPM, which is about 2-3 magnitudes faster. For one, the primal HLSP formulation is linear and therefore converges on each priority level after one iteration. In contrast, D-HADM and D-HIPM solve a non-linear QCLSP, which requires the iterative solution of linearized problems. Furthermore, by virtue of the rank-identifying nature of computing nullspace bases of active constraints of each level, $\mathcal{N}$IPM can  terminate as soon as the rank of the active constraints reaches maximum rank. For example, for $p=10$, $\mathcal{N}$IPM terminates after level 6 in some samples due to the linear dependency in the constraint matrices $A_{\bE_{\cup p}}$. 
	
	Convergence accuracy of D-HADM is moderate at levels of $1\cdot 10^{-2}$. This can partially be attributed to the incomplete pre-conditioning for computational efficiency described in Sec.~\ref{sec:precond}, but is also characteristic to first-order methods like ADMM~\cite{boyd2011}. On the other hand, the dual solver D-HIPM converges at similar accuracy as the primal solver $\mathcal{N}$IPM.
	
	The primal variables $x$ and norm of $v_{\bE_l}$ are depicted in Fig.~\ref{fig:xv} for one sample of a test with $p=n=10$. In the case of full rank of each constraint matrix $A_{\bC_{l}}$ ($l=1,\dots,p$), the primal variables $x$ and the primal objectives $\frac{1}{2}\Vert v_{\bE_l}\Vert_2^2$ of both the primal and dual solvers correspond. In the case of rank-deficiency, the primal variables $x$ are not uniquely determined and differ between the primal and dual solvers. The values of the primal objectives correspond up to level 4, but are more accurate for the dual solvers from level 5 with $\Vert v_{\bE_5}\Vert_2^2 = 106.7$, and $\Vert v_{\bE_5}\Vert_2^2 = 110.5$ for the primal solver. In this example, all solvers converge accurately at $\Vert k^{\text{D-HADM}} \Vert_2^2 = 7.9\cdot 10^{-5}$, $\Vert k^{\text{D-HIPM}} \Vert_2^2 = 2.7\cdot 10^{-4}$, and $\Vert k^{\mathcal{N}\text{IPM}} \Vert_2^2 = 6.8\cdot 10^{-7}$.

	\section{Conclusion}
	\label{sec:conclusion}
	In this work, an ADMM solver has been developed which solves the dual form of HLSP's with equality constraints. Some computational efficiency is achieved by substitution of primal-dual variables which do not need to be considered in expensive matrix factorizations. The solver is shown to be computationally more efficient than a dual HLSP solver based on the IPM. However, the solver is significantly slower than conventional primal HLSP solvers for example based on the IPM.
	
	Despite their computational challenges, dual HLSP solvers offer distinct advantages with respect to primal HLSP solvers. They are continuous and differentiable, which builds the foundation for a broader application of hierarchical programming with linear constraints in future work. Computation in a distributed fashion could contribute to more refined hierarchical control principles in robot swarm control. Inclusion of HLSP's as neurons in neural networks can help learning hierarchical patterns in the training data. Here, it needs to be investigated how the gradient computation in Sec. \ref{sec:diffdhlsp} can be designed efficiently. Especially, it remains to be seen whether the linear system \eqref{eq:lindiffsystem} can be solved efficiently, for example by re-using values from the ADMM or IPM based \ref{eq:d-hlsp} solvers. 
	
	The proposed dual HLSP solvers based on convex formulation is not able to consider inequality constraints due to the presence of non-convex complementary constraints. Future research is dedicated to identifying convex approximations to these constraints for computationally efficient handling of inequality constraints.

	\bibliographystyle{plain} 
	\bibliography{bib.bib}
	
	\appendix
	
	\section{Gradient of~\ref{eq:d-hlsp}}
	\label{sec:diffdhlsp}
	
	Constrained continuous programs like QP's and quadratically constrained quadratic programs (QCQP) are differentiable with respect to their problem parameters, as has been demonstrated in \cite{optnet} and \cite{Lidec2021}, respectively. The gradient can be formulated by taking differentials of the first-order optimality conditions of the Lagrangian \eqref{eq:d-hlsp-lag} of the \ref{eq:d-hlsp}. Unlike \cite{Lidec2021}, the gradient of \ref{eq:d-hlsp} additionally needs to consider linear equality constraints. The complementary KKT conditions of the quadratic inequality constraints in \ref{eq:d-hlsp} are (for $l = 1,\dots,p-1$)
	\begin{align}
	&\theta_{p,l}( {v}_{\bE_l}^T({v}_{\bE_l} + b_{\bE_l}) +  \lambda_{l,\bE_{\cup l-1}}^Tb_{\bE_{\cup l-1}}) = 0 \nonumber
	\end{align} 
	The differentials of the KKT condition of \ref{eq:d-hlsp} with respect to the primal and dual variables and the problem parameters $A_{\bE_{\cup p}}$ and $b_{\bE_{\cup p}}$ result in the linear system
	\begin{align}
	\dd K_{\psi}(x,v_{\bE_{\cup p}},\lambda_{\cup p, \bE_{\cup p}},\eta_{p,\cup p-1},\theta_{p,\cup p-1}) \dd \psi = \dd k_{\psi}(\dd A_{\bE_{\cup p}}, \dd b_{\bE_{\cup p}})
	\label{eq:lindiffsystem}
	\end{align}
	The vector $\dd q$ represents the Jacobians of the primal variables with respect to the parameters of the D-HLSP
	\begin{align}
	\dd\psi &= 		\BIN \dd x^T & \dots & \dd v_{\bE_l}^T & \dd \mu_{p,\bE_l}^T & \dd\theta_{p,l}^T & \dd \eta_{p,l}^T   & \dots & \dd v_{\bE_p}^T & \dd \mu_{p,\bE_p}^T & \dots & \dd \lambda_{l,\bE_{\cup l-1}}^T & \dots & \BOUT^T
	\end{align}
	The matrix $\dd K_{\psi}$ and vector $\dd k_{\psi}$ are defined as
	\begin{align}
	\dd K_{\psi} \coloneqq&
	\resizebox{1\hsize}{!}{$ \left[\begin{array}{@{}cccccccccccccccccccccc@{}}
		{\color{gray}\dd x} &
		{\color{gray}\dots} &
		{\color{gray}\dd v_{\bE_l}} & 
		{\color{gray}\dd \mu_{p,\bE_l}}&
		{\color{gray}\dd \theta_{p,l}}&
		{\color{gray}\dd \eta_{p,l}}&
		{\color{gray}\dots} &
		{\color{gray}\dd v_{\bE_p}} &
		{\color{gray}\dd \mu_{p,\bE_p}} &
		{\color{gray}\dots} &
		{\color{gray}\dd \lambda_{l,\bE_{\cup l-1}}} &
		{\color{gray}\dots}
		\\
		&\dots&& A_{\bE_{l}}^T &&&\dots&A_{\bE_{p}}^T&&\dots&&\dots \\
		\hline\\
		\vdots & \ddots & \vdots& \vdots& \vdots& \vdots& \ddots& \vdots& \vdots& \ddots& \vdots& \ddots\\
		\hline\\
		&\dots& 2\theta_{p,l}I & -I & 2v_{\bE_{l}} + b_{\bE_{l}} & 	A_{\bE_{l}} & \dots &&& \dots && \dots  \\
		A_{\bE_{l}} &\dots& -I &&&& \dots &&& \dots && \dots  \\
		&\dots& \theta_{p,l}(2v_{\bE_{l}} + b_{\bE_{l}})^T && {v}_{\bE_l}^T({v}_{\bE_l} + b_{\bE_l}) +  \lambda_{l,\bE_{\cup l-1}}^Tb_{\bE_{\cup l-1}}& &\dots&&&\dots& \theta_{p,l}b_{\bE_{\cup l-1}}^T & \dots\\ 
		&\dots& A_{\bE_{l}}^T &&&&\dots&&&\dots&  A_{\bE_{l-1}}^T & \dots\\
		\hline\\
		\vdots & \ddots & \vdots&\vdots&\vdots&\vdots& \ddots &\vdots&\vdots& \ddots &\vdots& \ddots &\\
		\hline\\
		&\dots&&&&&\dots&I&-I & \dots && \dots \\
		A_{\bE_{p}} &\dots&&&&&\dots& -I && \dots && \dots  \\
		\hline\\
		\vdots & \ddots & \vdots&\vdots&\vdots&\vdots& \ddots &\vdots&\vdots& \ddots &\vdots& \ddots &\\
		&\dots&&&  b_{\bE_{\cup l-1}} & A_{\bE_{\cup l-1}} & \dots &&& \dots & & \dots &\\
		\vdots & \ddots & \vdots&\vdots&\vdots&\vdots& \ddots &\vdots&\vdots& \ddots &\vdots& \ddots &\\
		\end{array}\right]$}
	\end{align}
	and
	\begin{align}
	\dd k_{\psi} \coloneqq &
	\BIN  - \sum_{l=1}^{p} \dd A_{\bE_l}^T\mu_{p,\bE_l}\\
	\vdots\\
	-\theta_{p,l}\dd b_{\bE_l} - \dd A_{\bE_l}\eta_{l} \\
	-\dd A_{\bE_l}x + \dd b_{\bE_l}\\
	-\theta_{p,l}({v}_{\bE_l}^T \dd b_{\bE_l} +  \lambda_{l,\bE_{\cup l-1}}^T\dd b_{\bE_{\cup l-1}}) \\
	- \dd A_{\bE_l}^Tv_{\bE_l} -  \dd A_{\bE_{\cup l-1}}^T\lambda_{l,\bE_{\cup l-1}}\\
	\vdots\\
	0\\
	-\dd A_{\bE_p}x + \dd b_{\bE_p}\\
	\vdots \\
	- \dd A_{\bE_{\cup l-1}}\eta_{p,l} + \theta_{p,l}\dd b_{\bE_{\cup l-1}} \\
	\vdots \BOUT
	\end{align}
	The Jacobian with respect to the desired parameter can be obtained by solving the linear system 	\eqref{eq:lindiffsystem} while the corresponding differential is set to the identity matrix, and the remaining ones are set to zero. This requires the inversion of the matrix $\dd K_{\psi}$. Future work is dedicated on how this can be achieved efficiently, possibly by reusing computations from the dual HLSP solver.

	\section{A primal-dual interior-point method for \ref{eq:d-hlsp}}
	\label{app:ipm}
	
	In order to motivate the choice of the ADMM as the solver-type of choice for \ref{eq:d-hlsp}, it is shown here how the IPM leads to an inefficient KKT system. Critically, primal-dual variables cannot be eliminated, which requires a factorization of a larger square matrix of dimension $n_x + \sum_{l=0}^{p-1} n_l^{dual}$ in every solver iteration, instead of $n_x$ as seen for the ADMM based solver presented in Sec. \ref{sec:hadmm}.
	
	First, the slack variable  $w_l^{\epsilon}$ is introduced in order to handle the dual constraints of each priority level.
	\begin{align}
	\mini_{{x},{v}_{\bE_{\cup p}},\Lambda_{p-1},w_{\bE_{\cup p}},w_{\cup p-1}}& \qquad \frac{1}{2}\Vert {v}_{\bE_p} \Vert^2_2 - \sigma\mu \sum_{l=1}^{p-1} \log(w_l^{\epsilon}) \label{eq:hlspdualipm}\\
	\text{s.t.}
	& \qquad A_{\bE_l}{x} - b_{\bE_l}  =  {v}_{\bE_l}\qquad l=1,\dots,p\nonumber\\
	&\qquad A_{\bE_l}^T{v}_{\bE_l} +  A_{\bE_{\cup l-1}}^T\lambda_{l,\bE_{\cup l-1}}= 0\nonumber\\
	& \qquad {v}_{\bE_l}^T({v}_{\bE_l} + b_{\bE_l}) +  \lambda_{l,\bE_{\cup l-1}}^T b_{\bE_{\cup l-1}} = w_l^{\epsilon}\nonumber
	\end{align}
	The Lagrangian of the above problem writes as
	\begin{align}
	\mathcal{L}_p^{\text{IPM}}(x,v,\lambda) &= \frac{1}{2}\Vert {v}_{\bE_p} \Vert^2_2 - \sigma\mu \sum_{l=1}^p\log(w_{l}^{\epsilon}) + \sum_{l=1}^{p-1} \mu_{p,\bE_l}^T(A_{\bE_l}{x} - b_{\bE_l} - {v}_{\bE_l}) \\
	&+ \sum_{l=1}^{p-1} \eta_{p,l}^T\left(A_{\bE_l}^T{v}_{\bE_l} + A_{\bE_{\cup l-1}}^T\lambda_{l,\bE_{\cup l-1}}\right) + \theta_{p,l}\left(v_{\bE_l}^T(v_{\bE_l} + b_{\bE_l})+\lambda_{l,\bE_{\cup l-1}}^Tb_{\bE_{\cup l-1}} + w_{l}^{\epsilon}\right)  \nonumber
	\end{align}
	The KKT conditions $K^{\text{IPM}}$ are composed of
	\begin{align}
	k_x^{\text{IPM}} \coloneqq \nabla_{{x}} \mathcal{L}_p^{\text{IPM}} =&  A_{\bE_{\cup p}}^T\mu_{p,\bE_{\cup p}} = 0\nonumber
	\end{align}
	and for $l=1,\dots,p-1$
	\begin{align}
	k_{v_{\bE_l}}^{\text{IPM}} &\coloneqq \nabla_{v_{\bE_l}} \mathcal{L}_p = -\mu_{p,\bE_l} + \theta_{p,l}(2{v}_{\bE_l} + b_{\bE_l}) + A_{\bE_1}\eta_{p,l}= 0\nonumber\\
	k_{\mu_{p,\bE_l}}^{\text{IPM}} &\coloneqq \nabla_{\mu_{p,\bE_l}} \mathcal{L}_p = A_{\bE_l}{x} - b_{\bE_l} - {v}_{\bE_l}  = 0\nonumber\\
	k_{\eta_{p,l}}^{\text{IPM}} &\coloneqq \nabla_{\eta_{p,l}} \mathcal{L}_p = A_{\bE_l}^T{v}_{\bE_l} +  A_{\bE_{\cup l-1}}^T\lambda_{l,\bE_{\cup l-1}} = 0\nonumber\\
	k_{\theta_{p,l}}^{\text{IPM}} &\coloneqq \nabla_{\theta_{p,l}} \mathcal{L}_p = {v}_{\bE_l}^T({v}_{\bE_l} + b_{\bE_l}) +\lambda_{l,\bE_{\cup l-1}}^Tb_{\bE_{\cup l-1}} + w_{l}^{\epsilon} = 0\nonumber\\
	k_{w_{l}^{\epsilon}}^{\text{IPM}} &\coloneqq \nabla_{w_{l}} \mathcal{L}_p = \theta_{p,l} \odot w_{l}^{\epsilon} - \sigma\mu= 0\nonumber
	\end{align}
	and for $l = 2,\dots,p-1$
	\begin{align}
	k_{\lambda_{l,\bE_{\cup l-1}}}^{\text{IPM}} &\coloneqq \nabla_{\lambda_{l,\bE_{\cup l-1}}} \mathcal{L}_p = \theta_{p,l} b_{\bE_{l-1}} + A_{\bE_{\cup l-1}}\eta_{p,l}= 0\nonumber
	\end{align}
	and for level $p$
	\begin{align}
	k_{v_{\bE_p}}^{\text{IPM}} &\coloneqq \nabla_{v_{\bE_p}} \mathcal{L}_p = v_{\bE_p} - \mu_{p,\bE_p} = 0\nonumber\\
	k_{\mu_{p,\bE_p}}^{\text{IPM}} &\coloneqq \nabla_{\lambda_{p,\bE_p}} \mathcal{L}_p = A_{\bE_p}{x} - b_{\bE_p} - {v}_{\bE_p} = 0\nonumber
	\end{align}
	Applying Newton's method to the non-linear KKT conditions results in
	\begin{align}
	K^{\text{IPM}} \Delta {\psi} = -k^{\text{IPM}}({\psi})
	\end{align}
	The variable vector $\psi$ is defined as (with $l = 1,\dots,p-1$ and $k = 1,\dots,l-1$)
	\begin{align}
	\psi = 		\BIN x^T & \dots & v_{\bE_l}^T & \mu_{p,\bE_l}^T & \eta_{p,l}^T & \theta_{p,l}^T  & w_l^{\epsilon,T} & \dots & v_{\bE_p}^T & \mu_{p,\bE_p} & \dots & \lambda_{l,\bE_{\cup l-1}}^T & \dots\BOUT^T\nonumber
	\end{align}
	The Lagrangian Hessian $K^{\text{IPM}} \coloneqq \nabla_{\psi} k^{\text{IPM}}$ is given by (with $l=1,\dots,p-1$)
	
	\begin{align}
	K^{\text{IPM}}_l \coloneqq
	\resizebox{0.85\hsize}{!}{$ \left[\begin{array}{@{}cccccccccccccccccccccc@{}}
		{\color{gray}\Delta x} &
		{\color{gray} \dots} &
		{\color{gray}\Delta v_{\bE_l}} & 
		{\color{gray}\Delta \mu_{p,\bE_l}}&
		{\color{gray}\Delta \theta_{l}}&
		{\color{gray}\Delta \eta_{l}}&
		{\color{gray}\Delta w_{l}^{\epsilon}}&
		{\color{gray}\dots} &
		{\color{gray}\Delta v_{\bE_p}} & {\color{gray}\Delta \mu_{p,\bE_p}} &
		{\color{gray}\dots} &
		{\color{gray}\Delta \lambda_{l,\bE_{\cup l-1}}} & 
		{\color{gray}\dots}
		\\
		&\dots&&& A_{\bE_{l}}^T &&&\dots&A_{\bE_{l}}^T&&\dots&&\dots \\
		\hline\\
		\vdots & \ddots & \vdots& \vdots& \vdots& \vdots& \vdots& \ddots& \vdots& \vdots& \ddots& \vdots& \ddots\\
		\hline\\
		&\dots& 2\theta_{p,l}I & -I & 2v_{\bE_{l}} + b_{\bE_{l}} & 	A_{\bE_{l}} && \dots &&& \dots && \dots  \\
		A_{\bE_{l}} &\dots&-I&&&&& \dots &&& \dots && \dots \\
		&\dots& (2v_{\bE_{l}} + b_{\bE_{l}})^T &&&& &\dots&&& \dots & b_{\bE_{\cup l-1}}^T & \dots\\ 
		&\dots& A_{\bE_{l}}^T &&&&&\dots&&&\dots & A_{\bE_{\cup l-1}}^T & \dots\\
		&\dots&&& \diag(w_{l}^{\epsilon}) && \diag(\theta_{p,l}) &\dots &&& \dots && \dots \\
		\hline\\
		\vdots & \ddots & \vdots& \vdots& \vdots& \vdots& \vdots& \ddots& \vdots& \vdots& \ddots& \vdots& \ddots\\
		\hline\\
		&\dots&&&&&&\dots&I&-I & \dots && \dots \\
		A_{\bE_{p}} &\dots&&&&&&\dots& -I && \dots && \dots  \\
		\hline\\
		\vdots & \ddots & \vdots& \vdots& \vdots& \vdots& \vdots& \ddots& \vdots& \vdots& \ddots& \vdots& \ddots\\
		&\dots&&& b_{\bE_{\cup l-1}} & A_{\bE_{\cup l-1}}&& \dots &&& \dots &0& \dots \\
		\vdots & \ddots & \vdots& \vdots& \vdots& \vdots& \vdots& \ddots& \vdots& \vdots& \ddots& \vdots& \ddots
		\end{array}\right]$}\nonumber
	\end{align}
	This system is repeatedly solved for $\Delta {\psi}$, and the step is applied by ${\psi}_{i+1} \leftarrow {\psi}_i + \alpha\Delta {\psi}_i$. The line-search factor $\alpha$ ensures dual feasibility 
	$\theta_{p,l} \geq 0$ 
	and  $w_l^{\epsilon}\geq 0$.
	The above system is sparse, and substitutions can be found for most primal and dual variables. However, it can be observed that the primal-dual variables can not be eliminated, as there are no invertible elements on the corresponding diagonal entries.
	Therefore, in each Newton iteration, a substituted KKT matrix of square dimensions $n_x + \sum_{l=2}^{p-1} n_{l}^{dual}$ needs to be factorized.

	\section{A primal-dual interior-point method for \ref{eq:qcproj}}
	\label{app:ipmepsilon}
	The splitting variable update \ref{eq:qcproj} can only be done efficiently if an incomplete scaling to the KKT matrix \eqref{eq:equPA} is applied (see Sec.~\ref{sec:precond}). This can carry negative implications on the overall ADMM convergence. In the following, an IPM free of matrix factorizations is proposed. Despite its iterative nature and the substantial amount of vector-vector operations, it is a valuable tool if the focus is on numerical stability at reasonable computational expenditure.
	
	\ref{eq:qcproj} is transformed into an equality-only constrained program by using log-barriers for the inequality constraint
	\begin{align}
	\mini_{z_l,\lambda_{l,\bE_{\cup l-1}}} \qquad& \frac{1}{2}\left\Vert 
	\BIN V_{\phi,l}z_{\bE_l} \\ V_{\nu,l}\tilde{\lambda}_{l,\bE_{\cup l-1}} \BOUT
	-
	\BIN
	\overline{a}_{1,l}\\
	\overline{a}_{2,\cup l-1}\\
	\BOUT
	\right\Vert_2^2 
	- \sigma\mu \log(w_l^{\epsilon}) \\
	s.t.\qquad & z_{\bE_l}^Tz_{\bE_l} - \hat{b}_{\bE_l}^T\hat{b}_{\bE_l} + \tilde{\lambda}_{l,\bE_{\cup l-1}}^Tb_{\bE_{\cup l-1}} = -w_l^{\epsilon}\nonumber
	\end{align}
	The Lagrangian of this problem writes as
	\begin{align}
	\mathcal{L}_l^{\epsilon} =&  \frac{1}{2}\left\Vert 
	\BIN V_{\phi,l}z_{\bE_l} \\ V_{\nu,l}\tilde{\lambda}_{l,\bE_{\cup l-1}} \BOUT
	-
	\BIN
	\overline{a}_{1,l}\\
	\overline{a}_{2,\cup l-1}\\
	\BOUT
	\right\Vert_2^2  - \sigma\mu \log(w_l^{\epsilon}) +
	\theta_{p,l} (z_{\bE_l}^Tz_{\bE_l} - \hat{b}_{\bE_l}^T\hat{b}_{\bE_l} + \tilde{\lambda}_{l,\bE_{\cup l-1}}^Tb_{\bE_{\cup l-1}} + w_l^{\epsilon}) 	
	\end{align}
	The first-order optimality conditions are
	\begin{align}
	k_l^{\epsilon}(\psi_l) = 
	\BIN 
	k_{z_{\bE_l}}^{\epsilon}\\
	k_{\tilde{\lambda}_{l,\bE_{\cup l-1}}}^{\epsilon}\\
	k_{w_l^{\epsilon}}^{\epsilon}\\
	k_{\theta_{p,l}}^{\epsilon}
	\BOUT
	\coloneqq 
	\BIN \nabla_{z_{\bE_l}}\mathcal{L}_l^{\epsilon} \\
	\nabla_{\tilde{\lambda}_{l,\bE_{\cup l-1}}}\mathcal{L}_l^{\epsilon}\\
	\nabla_{w_l^{\epsilon}}\mathcal{L}_l^{\epsilon}\\
	\nabla_{\theta_{p,l}}\mathcal{L}_l^{\epsilon}\BOUT
	=
	\BIN
	(V_{\phi,l}^2 + 2\theta_{p,l}I)z_{\bE_l} - V_{\phi,l}\overline{a}_{1,l} \\
	V_{\nu,l}^2\tilde{\lambda}_{l,\bE_{\cup l-1}} - V_{\nu,l}\overline{a}_{2,\cup l-1} + \theta_{p,l} b_{\bE_{\cup l-1}} \\
	\theta_{p,l} \odot w_l^{\epsilon} - \sigma\mu \\
	z_{\bE_l}^Tz_{\bE_l} - \hat{b}_{\bE_l}^T\hat{b}_{\bE_l} + \tilde{\lambda}_{l,\bE_{\cup l-1}}^Tb_{\bE_{\cup l-1}} + w_l
	\BOUT = 0
	\end{align}
	Applying Newton's method to the non-linear KKT conditions results in
	\begin{align}
	K_{l}^{\epsilon} \Delta {\psi}_l = -k_{l}^{\epsilon}({\psi_l}) \label{eq:epsilonnewton}
	\end{align}
	The variable vector $\psi_l$ is defined as 
	\begin{align}
	\psi_l \coloneqq 		\BIN z_{\bE_l}^T & \tilde{\lambda}_{l,\bE_{\cup l-1}}^T & w_l^{\epsilon}  & \theta_{p,l} \BOUT^T\nonumber
	\end{align}
	The Lagrangian Hessian $ K_l^{{\epsilon}} \coloneqq \nabla_{\psi} k_l^{\epsilon}$ is given by
	
	\begin{align}
	K^{\epsilon}_l \coloneqq
	{ \left[\begin{array}{@{}cccccccccccccccccccccc@{}}
		{\color{gray}\Delta z_{\bE_l}} & 	{\color{gray}\Delta \tilde{\lambda}_{l,\bE_{\cup l-1}}} &
		{\color{gray}\Delta w_l^{\epsilon}} &
		{\color{gray}\Delta \theta_{p,l}}\\
		(V_{\phi,l}^2 + 2\theta_{p,l}I) & & &2z_{\bE_l}\\
		& V_{\nu,l}^2 & &  b_{\bE_{\cup l-1}} \\
		& &  \theta_{p,l} &  w_l^{\epsilon}\\
		2z_{\bE_l}^T & b_{\bE_{\cup l-1}}^T & 1 
		\end{array}\right]}\nonumber
	\end{align}
	The system \eqref{eq:epsilonnewton} is solved iteratively until the KKT norm residual is below a given threshold $\Vert k_l^{\epsilon}\Vert_2 \leq \xi$.
	Substitutions in dependence of the scalar $\Delta \theta_{p,l}$ can be identified, such that a matrix factorization of $K_l^{{\epsilon}}$ is not necessary.
	\begin{align}
	\Delta z_{\bE_l} &= (V_{\phi,l}^2 + 2\theta_{p,l}I)^{-1}(-2 z_{\bE_l} \Delta \theta_{p,l}- k_{z_{\bE_l}}^{\epsilon})\nonumber\\
	\Delta w_l^{\epsilon} &= \frac{1}{\theta_{p,l}}(-w_l^{\epsilon}\Delta \theta_{p,l} - k_{w_l}^{\epsilon})\nonumber\\
	\Delta \tilde{\lambda}_{l,\bE_{\cup l-1}}  &= V_{\nu,l}^{-2}\left(- b_{\bE_{\cup l-1}} \Delta \theta_{p,l} -k_{\tilde{\lambda}_{l,\bE_{\cup l-1}}}^{\epsilon}\right)\nonumber
	\end{align}
	The above expressions are inserted into the last row of \eqref{eq:epsilonnewton}, which results in a formula for $\Delta \theta_{p,l}$
	\begin{align}
	&\Delta \theta_{p,l} = \frac{t_{2,l}}{t_{1,l}}\nonumber
	\end{align}
	with the scalars
	\begin{align}
	t_{1,l}&\coloneqq 4z_{\bE_l}^T(V_{\phi,l}^2 + 2\theta_{p,l}I)^{-1}z_{\bE_l} + b_{\bE_{\cup l-1}}^TV_{\nu,l}^{-2}b_{\bE_{\cup l-1}} + \frac{w_l^{\epsilon}}{\theta_{p,l}}\nonumber\\
	t_{2,l} &\coloneqq  -2z_{\bE_l}^T(V_{\phi,l}^2 + 2\theta_{p,l}I)^{-1}k_{z_{\bE_l}} - b_{\bE_{\cup l-1}}^TV_{\nu,l}^{-2}k_{\tilde{\lambda}_{l,\bE_{\cup l-1}}}^{\epsilon} - \frac{1}{\theta_{p,l}}k_{w_l^{\epsilon}}+ k_{\theta_{p,l}}\nonumber
	\end{align}
	Once the step $\Delta \psi$ has been identified, line-search $\psi \leftarrow \psi + \alpha \Delta \psi$ with $\alpha \in [0,1]$ ensures that the non-negativity conditions on $w_l^{\epsilon}\geq 0$ and $\theta_{p,l} \geq 0$ are respected.

\end{document}